\documentclass[11pt,a4paper,reqno]{amsart}

\usepackage[T1]{fontenc}
\usepackage{lmodern}
\usepackage{microtype}
\usepackage{amsmath,amssymb,mathtools}
\usepackage{aliascnt}
\usepackage{etoolbox}
\usepackage{enumitem}
\usepackage{booktabs}
\usepackage{xcolor}
\usepackage[
  a4paper,
  top=24mm,
  bottom=26mm,
  left=25mm,
  right=25mm,
  includeheadfoot
]{geometry}
\usepackage[hidelinks]{hyperref}

\allowdisplaybreaks
\setlist[enumerate]{leftmargin=*,itemsep=2pt,topsep=4pt}
\setlist[itemize]{leftmargin=*,itemsep=2pt,topsep=4pt}

\newtheorem{theorem}{Theorem}[section]

\newaliascnt{lemma}{theorem}
\newtheorem{lemma}[lemma]{Lemma}
\aliascntresetthe{lemma}

\newaliascnt{proposition}{theorem}

\aliascntresetthe{proposition}

\newaliascnt{corollary}{theorem}
\newtheorem{corollary}[corollary]{Corollary}
\aliascntresetthe{corollary}

\newaliascnt{claim}{theorem}
\newtheorem{claim}[claim]{Claim}
\aliascntresetthe{claim}

\theoremstyle{definition}

\newaliascnt{definition}{theorem}

\aliascntresetthe{definition}

\newaliascnt{problem}{theorem}
\newtheorem{problem}[problem]{Problem}
\aliascntresetthe{problem}

\theoremstyle{remark}

\newaliascnt{remark}{theorem}
\newtheorem{remark}[remark]{Remark}
\aliascntresetthe{remark}

\numberwithin{equation}{section}

\usepackage[nameinlink,capitalise,noabbrev]{cleveref}

\crefname{theorem}{Theorem}{Theorems}
\Crefname{theorem}{Theorem}{Theorems}
\crefname{lemma}{Lemma}{Lemmas}
\Crefname{lemma}{Lemma}{Lemmas}
\crefname{proposition}{Proposition}{Propositions}
\Crefname{proposition}{Proposition}{Propositions}
\crefname{corollary}{Corollary}{Corollaries}
\Crefname{corollary}{Corollary}{Corollaries}
\crefname{claim}{Claim}{Claims}
\Crefname{claim}{Claim}{Claims}
\crefname{definition}{Definition}{Definitions}
\Crefname{definition}{Definition}{Definitions}
\crefname{problem}{Problem}{Problems}
\Crefname{problem}{Problem}{Problems}
\crefname{remark}{Remark}{Remarks}
\Crefname{remark}{Remark}{Remarks}
\crefname{section}{Section}{Sections}
\Crefname{section}{Section}{Sections}
\crefname{subsection}{Section}{Sections}
\Crefname{subsection}{Section}{Sections}
\crefname{subsubsection}{Section}{Sections}
\Crefname{subsubsection}{Section}{Sections}

\newcommand{\qB}{q_B}
\newcommand{\calE}{\mathcal{E}}

\newcommand{\gG}{\gamma_G}
\newcommand{\kappaG}{\kappa_G}

\newcommand{\Dtwo}{\Delta_2}

\title{Degeneracy bounds, stability, and a sharp gap for $B$-colorings}

\author[\NoCaseChange{X. Hu, J. Kong, and Y. Wang}]{%
  \NoCaseChange{%
    \begin{tabular}{@{}c@{}}
      Xiaoxue Hu\textsuperscript{1}
      \
      Jiangxu Kong\textsuperscript{2,*}
      \ and \
      Yiqiao Wang\textsuperscript{3}
      \tabularnewline[4pt]
      {\normalfont\itshape
       \textsuperscript{1}School of Science, Zhejiang University of Science and Technology, Hangzhou 310023, China}
      \tabularnewline[3pt]
      {\normalfont\itshape
       \textsuperscript{2}School of Mathematics, Hangzhou Normal University, Hangzhou, 311121, China}
      \tabularnewline[3pt]
      {\normalfont\itshape
       \textsuperscript{3}Department of Mathematics, Beijing University of Technology, Beijing 100124, China}
      \tabularnewline[4pt]
      {\normalfont
       \textsuperscript{*}Corresponding author. Email: \href{kjx@hznu.edu.cn} {kjx@hznu.edu.cn}}
    \end{tabular}%
  }%
}

\makeatletter

\renewcommand{\section}{%
  \@startsection{section}{1}{\z@}%
    {.7\linespacing\@plus\linespacing}%
    {.5\linespacing}%
    {\normalfont\bfseries\raggedright}}

\renewcommand{\subsection}{%
  \@startsection{subsection}{2}{\z@}%
    {.5\linespacing\@plus.7\linespacing}%
    {.3\linespacing}%
    {\normalfont\bfseries\raggedright}}

\renewcommand{\subsubsection}{%
  \@startsection{subsubsection}{3}{\z@}%
    {.5\linespacing\@plus.7\linespacing}%
    {.3\linespacing}%
    {\normalfont\itshape\raggedright}}

\makeatother

\date{}

\hypersetup{
  pdftitle={Degeneracy Bounds, Stability, and a Sharp Gap for B-Colorings},
  pdfauthor={Xiaoxue Hu, Jiangxu Kong, and Yiqiao Wang},
  pdfsubject={Graph theory; edge-coloring; extremal and stability results},
  pdfkeywords={B-coloring, rainbow 4-cycle, degeneracy, maximum codegree, stability, complete bipartite graph, loopless multigraph.}
}

\begin{document}

\begin{abstract}
A $B$-coloring of a graph is a proper edge-coloring in which every $4$-cycle is rainbow, and $q_B(G)$ denotes the minimum number of colors in such a coloring.  Let $\Delta_2(G)$ denote the maximum number of common neighbors of two distinct vertices of $G$.  We prove that, for integers $1\le d\le\Delta$, every finite simple $d$-degenerate graph $G$ with $\Delta(G)\le\Delta$ satisfies 
$$q_B(G)\le \Delta+(d-1)\Delta_2(G)\le d\Delta.$$ 
Consequently, $d\Delta$ is the exact maximum, with equality precisely for graphs containing $K_{d,\Delta}$.  More generally, if $q_B(G)\ge d\Delta-s$, where $0\le s<\Delta$, then $G$ contains $K_{d,\Delta-s}$; if also $s<d$, then $G$ has at least $d-s$ vertices of degree $\Delta$ with the same open neighborhood.  For $\Delta\ge3$, we further show that every $K_{3,\Delta}$-free 3-degenerate graph satisfies $q_B(G)\le3\Delta-2$; the example $K_{3,\Delta-1}$ shows that this bound is best possible up to one.

For loopless multigraphs, we establish a sharp gap in the possible values of $q_B(G)$.  For every integer $\Delta\ge3$, every finite loopless multigraph $G$ with $\Delta(G)\le\Delta$ satisfies 
$$q_B(G)\le\Delta(\Delta-1)$$ 
unless $G$ has a component isomorphic to $K_{\Delta,\Delta}$, in which case $q_B(G)=\Delta^2$.  The bound $\Delta(\Delta-1)$ is attained by both $K_{\Delta,\Delta-1}$ and $K_{\Delta,\Delta}-e$. Consequently, among finite loopless multigraphs with maximum degree at most $\Delta$, no value of $q_B(G)$ lies strictly between $\Delta^2-\Delta$ and $\Delta^2$.

\par\medskip
\noindent\textbf{Keywords:}
$B$-coloring, rainbow $4$-cycle, degeneracy, maximum codegree, stability, complete bipartite graph, loopless multigraph.

\par\smallskip
\noindent\textbf{2020 Mathematics Subject Classification:} 05C15, 05C35.
\end{abstract}

\maketitle

\section{Introduction}\label{sec:intro}

A proper edge-coloring assigns distinct colors to incident edges.  Such
a coloring of a graph $G$ is a \emph{$B$-coloring} if every $4$-cycle
receives four distinct colors.  Let $\qB(G)$ denote the minimum number of
colors in a $B$-coloring of $G$.  The terminology was
introduced by Gy\'arf\'as and S\'ark\"ozy in connection with less strong
edge-colorings and the Brown--Erd\H{o}s--S\'os $(7,4)$ problem
\cite{GyarfasSarkozy2023}.  Rainbow edge-colorings of $4$-cycles had
appeared earlier in the study of hypercubes by Faudree, Gy\'arf\'as,
Lesniak and Schelp \cite{FaudreeEtAl1993}.  This notion is unrelated to
the vertex $b$-chromatic number.

Define the \emph{$B$-conflict graph} $\Gamma_B(G)$ on $E(G)$ by joining
two edges when they are incident or opposite on a $4$-cycle.  Thus
$\qB(G)=\chi\bigl(\Gamma_B(G)\bigr)$.
If $G$ is simple and has maximum degree $\Delta$, every edge has at most
$\Delta^2-1$ $B$-conflicting edges, so the greedy bound gives
$\qB(G)\le\Delta^2$.  The complete bipartite graph
$K_{\Delta,\Delta}$ attains equality, since all of its $\Delta^2$ edges
pairwise conflict.

Previous work has studied the parameter on planar and outerplanar
graphs \cite{GyarfasEtAl2024,KongWangZheng2026}, regular bipartite graphs
\cite{GyarfasSarkozyWagner2026}, and subcubic graphs
\cite{XueHuKong2026}.  For $\Delta\ge3$, Lin and Lin
\cite{LinLin2026} proved the semistrong bound $\Delta^2-1$ for every
connected simple graph of maximum degree at most $\Delta$ other than
$K_{\Delta,\Delta}$.  Since every semistrong edge-coloring is a
$B$-coloring, the same $\Delta^2-1$ bound holds for $B$-colorings.  
For distinct vertices $x,y\in V(G)$, their codegree is
$|N_G(x)\cap N_G(y)|$.  The \emph{maximum codegree} of $G$ is
\[
  \Dtwo(G)=
  \max_{\substack{x,y\in V(G)\\x\ne y}}
  |N_G(x)\cap N_G(y)|.
\]
When $|V(G)|<2$, we set $\Dtwo(G)=0$.  Vuolo \cite{Vuolo2026} used this parameter in
a codegree-sensitive bound for planar graphs.  We use
$\Dtwo(G)$ to bound $q_B(G)$ for arbitrary $d$-degenerate graphs,
without assuming planarity.

Our first results determine the extremal value in terms of degeneracy and maximum codegree and give stability near equality.  Recall that $G$ is \emph{$d$-degenerate} if every nonempty subgraph has a vertex of degree at most $d$.  We use the term \emph{complete bipartite core} for a complete bipartite subgraph, not necessarily induced.

\begin{theorem}
\label{thm:deg-codeg-intro}
\label{thm:deg-codeg}
Let $d$ and $\Delta$ be integers with $1\le d\le\Delta$, and let $G$ be a finite simple $d$-degenerate graph with $\Delta(G)\le\Delta$.  Then $$\qB(G)\le \Delta+(d-1)\Dtwo(G)\le d\Delta.$$
\end{theorem}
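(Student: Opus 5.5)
The plan is a greedy edge-coloring driven by a degeneracy ordering, counting only the conflicts with edges that are already colored.

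\textbf{Ordering.} Since $G$ is $d$-degenerate, repeatedly delete a vertex of minimum degree and reverse the deletion order. This gives an ordering $v_1,\dots,v_n$ in which every vertex has at most $d$ neighbours preceding it; call these its \emph{back-neighbours}. Each edge $uv$ with $u$ before $v$ is \emph{owned} by its later endpoint $v$. We process the vertices $v_1,v_2,\dots$ in order and, at vertex $v$, colour the back-edges of $v$ one at a time. Then an edge $xy$ is coloured before an edge $uv$ owned by $v$ exactly when either $\max(x,y)$ precedes $v$, or $xy$ is an earlier-handled back-edge of $v$.

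\textbf{Counting coloured conflicts of $uv$} (with $u$ before $v$) at the moment it is coloured:
\begin{enumerate}
\item Edges incident to $u$ other than $uv$: at most $\Delta-1$, counted crudely whether or not they are coloured yet.
\item Coloured edges incident to $v$ other than $uv$: such an edge is $xv$ with $x$ earlier than $v$ (a back-edge of $v$), since edges from $v$ to later vertices are owned by those later vertices and are not yet coloured. So there are at most $d-1$ of them.
\item Coloured edges $xy$ opposite to $uv$ on a $4$-cycle $u\,v\,x\,y$, so that $x\sim v$, $y\sim u$, and $\{x,y\}\cap\{u,v\}=\emptyset$. Such an edge is not incident to $v$, so it is coloured only if both $x$ and $y$ precede $v$. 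Hence $x$ is a back-neighbour of $v$ other than $u$, giving at most $d-1$ choices. For fixed $x$, the vertex $y$ is a common neighbour of $x$ and $u$ different from $v$. Since $v$ is itself a common neighbour of $x$ and $u$, there are at most $\Dtwo(G)-1$ such $y$.
\end{enumerate}
Overlaps between these three categories only help, since we need only an upper bound.

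\textbf{Conclusion.} The number of forbidden colours is at most
\[
(\Delta-1)+(d-1)+(d-1)\bigl(\Dtwo(G)-1\bigr)=\Delta-1+(d-1)\Dtwo(G).
\]
Therefore a palette of $\Delta+(d-1)\Dtwo(G)$ colours always leaves a free colour. The resulting colouring is proper and rainbow on every $4$-cycle, because any two conflicting edges receive different colours when the later of them is coloured. The second inequality follows from $\Dtwo(G)\le\Delta$.

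The only delicate step is item 3. The point to verify is that an opposite edge which is already coloured forces both of its endpoints to precede $v$. This is what allows using back-neighbours of $v$ (at most $d-1$) rather than all neighbours of $v$ (up to $\Delta-1$). It also requires subtracting $v$ from the common neighbourhood of $x$ and $u$. When no valid $x$ exists, item 3 contributes nothing, so the count remains valid even when $\Dtwo(G)=0$.
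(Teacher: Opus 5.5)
Your proof is correct and is essentially the paper's argument: after reversing the paper's degeneracy ordering, your greedy order and your three-part count coincide with \cref{lem:later-conflict} followed by greedy colouring of $\Gamma_B(G)$. The three parts are edges at $u$, the other back-edges at $v$, and opposite edges indexed by a back-neighbour $x\ne u$ of $v$ and a common neighbour of $x$ and $u$ other than $v$. One small point: your closing justification for $\Delta_2(G)=0$ does not quite work as written, since replacing the negative item-3 bound by $0$ while keeping $d-1$ for item~2 overshoots $\Delta-1$; the fix, as in the paper, is to bound items~2 and~3 jointly by $\sum_x|N_G(x)\cap N_G(u)|\le(d-1)\Delta_2(G)$, each term being at least $1$ because $v\in N_G(x)\cap N_G(u)$.
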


The upper bound $d\Delta$ is attained by $K_{d,\Delta}$.  More strongly,
near equality forces a large complete bipartite core.

\begin{theorem}
\label{thm:near-core-intro}
Let $d$, $\Delta$, and $s$ be integers with $1\le d\le\Delta$ and
$0\le s<\Delta$.  Let $G$ be a finite simple $d$-degenerate graph with
$\Delta(G)\le\Delta$.  If $\qB(G)\ge d\Delta-s$, then $G$ contains a
copy of $K_{d,\Delta-s}$.  If in addition $s<d$,
then $G$ also contains a copy of $K_{d-s,\Delta}$.
\end{theorem}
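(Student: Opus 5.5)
The plan is to reprove the counting behind \cref{thm:deg-codeg} more carefully, so that near-equality yields a common neighbourhood of $d$ vertices rather than only a pair of vertices with large codegree. The bound $\Delta+(d-1)\Dtwo(G)\ge d\Delta-s$ by itself only gives $\Dtwo(G)\ge \Delta-s/(d-1)$, which produces $K_{2,\Delta-s}$ and no more.

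\emph{Reduction.} A $B$-coloring of $G$ restricts to a $B$-coloring of any subgraph, so $\qB$ is monotone. Choose a subgraph $H\subseteq G$ that is minimal under vertex deletion subject to $\qB(H)\ge d\Delta-s$. Since $H$ is $d$-degenerate, it has a vertex $v$ with $k:=\deg_H(v)\le d$. Let $N_H(v)=\{u_1,\dots,u_k\}$. By minimality, $H-v$ has a $B$-coloring $\varphi$ with at most $d\Delta-s-1$ colors. The case $d=1$ is immediate: $H$ is a forest, so there are no $4$-cycles and $\qB(H)=\Delta(H)$ by K\H{o}nig's theorem. A vertex of degree $\ge\Delta-s$ then gives $K_{1,\Delta-s}$, and for $s=0$ it gives $K_{1,\Delta}$.

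\emph{Extension count.} Colour $vu_1,\dots,vu_k$ one at a time. When we colour $vu_j$, the colours it must avoid are the following. First, those on the edges at $u_j$ in $H-v$, at most $\Delta-1$ of them. Second, those already on $vu_i$ for $i<j$, at most $k-1$ of them. Third, those on edges $xw$ that are opposite to $vu_j$ on a $4$-cycle $v\,u_j\,x\,w$, where $w\in N_H(v)\setminus\{u_j\}$ and $x\in N(u_j)\cap N(w)\setminus\{v\}$. For a fixed $w$ there are at most $c_w:=|N(u_j)\cap N(w)|-1$ choices of $x$ (the $-1$ accounts for $v$ itself). Hence the number of forbidden colours is at most
\[
(\Delta-1)+(k-1)+\sum_{w\ne u_j}c_w .
\]
Note that the terms for $w=u_i$ with $i<j$ should be merged with the term for the already coloured edge $vu_i$ so that nothing is double counted. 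Every $c_w\le\Delta-1$, so this total is at most $d\Delta-1$, which matches the $d\Delta$ bound.

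Since $\qB(H)\ge d\Delta-s$, some edge $vu_j$ cannot be coloured from the $d\Delta-s-1$ available colours. So the forbidden count is at least $d\Delta-s-1$. This forces $k=d$, and writing $\delta_w:=(\Delta-1)-c_w\ge 0$ and $\delta_0:=\Delta-\deg(u_j)$, it forces
\[
\delta_0+\sum_{w\ne u_j}\delta_w\le s .
\]

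\emph{Extracting the cores.} Put $A=N(u_j)$, so $|A|\le\Delta$ and $v\in A$. For each $w\in N_H(v)\setminus\{u_j\}$, the set $A\setminus N(w)$ has size $\deg(u_j)-(c_w+1)=\delta_w-\delta_0$. Therefore
\[
\Bigl|\bigcap_{w\in N_H(v)} N(w)\Bigr|\;\ge\;|A|-\sum_{w\ne u_j}|A\setminus N(w)|\;\ge\;\Delta-s ,
\]
so the $d$ vertices of $N_H(v)$ together with these common neighbours form $K_{d,\Delta-s}$.

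If in addition $s<d$, then $\delta_0\le s$. More precisely, at most $s$ of the $d$ indices $0$ and $w\neq u_j$ carry a positive deficit. If $\delta_0=0$, then $\deg(u_j)=\Delta$, and each of the at least $d-1-s$ vertices $w$ with $\delta_w=0$ satisfies $N(w)\supseteq A$; since $\deg(w)\le\Delta$, this gives $N(w)=A$. Together with $u_j$ this yields at least $d-s$ vertices sharing the neighbourhood $A$ of size $\Delta$, that is, $K_{d-s,\Delta}$.

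\emph{Main obstacle.} The delicate step is the bookkeeping in the extension count. Two things must be handled. First, the deficits have to be charged so that the cases $\delta_0>0$ and $\delta_0=0$ both still give $d-s$ zero-deficit vertices. Second, the interaction between the already coloured edges $vu_i$ and the $4$-cycles through them must not be double counted. I would handle this by choosing the colouring order so that the uncolourable edge is taken to be the first one that fails, and by recounting in that situation with sets of forbidden colours rather than sums.
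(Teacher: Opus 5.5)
Your argument is correct. The inequality you reach, $\delta_0+\sum_{w\ne u_j}\delta_w\le s$, is exactly the paper's certificate \eqref{eq:deficiency-certificate} in other notation: since $|N(u_j)\setminus N(w)|=\delta_w-\delta_0$, one has $d\delta_0+\sum_{w\ne u_j}(\delta_w-\delta_0)=\delta_0+\sum_{w\ne u_j}\delta_w$.

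The two proofs differ only in the minimality device.
\begin{itemize}
\item The paper takes a $q$-vertex-critical induced subgraph $J$ of $\Gamma_B(G)$ and fixes a degeneracy ordering of $G$. It then bounds the $J$-degree of the first edge of $J$ by its later conflicts via \cref{lem:later-conflict}. Criticality gives $\delta(J)\ge q-1$ directly, so no colouring is ever extended and nothing about $4$-cycles needs checking.
\item You take a vertex-minimal subgraph $H$ of $G$, colour $H-v$ for a vertex $v$ of degree at most $d$, and extend greedily over the star at $v$. This avoids the global edge ordering.
\end{itemize}
Your route does need the converse of what you state. It is not enough that the three colour sets must be avoided; you need that avoiding them suffices to give a $B$-colouring of $H$. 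To see this, note that each $4$-cycle through $v$ has two star edges and two old edges. Each star edge is incident with one old edge and opposite the other. The star edges differ from each other by your second constraint, and the old edges are incident. This check is the content of \cref{lem:star-recoloring} and is routine. It is what makes a successful greedy run contradict $\qB(H)\ge d\Delta-s$.

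The two obstacles you flag are not real.
\begin{itemize}
\item Double counting is harmless. Your sum is only used as an upper bound on the size of a union of colour sets, so no merging is needed. The greedy run already stops at the first failing edge.
\item The case $\delta_0>0$ in the $s<d$ part cannot occur. Your identity $|A\setminus N(w)|=\delta_w-\delta_0\ge0$ gives $\delta_w\ge\delta_0$ for every $w$. Hence $d\delta_0\le\delta_0+\sum_{w\ne u_j}\delta_w\le s<d$, so $\delta_0=0$. This is why the paper writes the certificate with the explicit factor $d$ on $\Delta-d_G(u_1)$.
\end{itemize}
With this line added the proof is complete. The separate treatment of $d=1$ is unnecessary, since the general count covers it.
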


In particular, $\qB(G)=d\Delta$ if and only if $G$ contains a copy of $K_{d,\Delta}$.  Equivalently, this occurs precisely when $G$ has a false-twin class of size at least $d$ whose vertices have degree $\Delta$, where two vertices are \emph{false twins} if they have the same open neighborhood.  Tracking the full deficiency in a degeneracy edge ordering also yields a one-defect description when $\qB(G)\ge d\Delta-1$.

When $d=3$, excluding the extremal core yields a further improvement. For every integer $\Delta\ge3$, each finite simple $3$-degenerate graph $G$ with $\Delta(G)\le\Delta$ and no copy of $K_{3,\Delta}$ satisfies $\qB(G)\le3\Delta-2$, whereas $\qB(K_{3,\Delta-1})=3\Delta-3$.

At the dense endpoint $d=\Delta$, the preceding theorem gives the general extremal value $\Delta^2$ and identifies
$K_{\Delta,\Delta}$ as the extremal core.  After excluding a $K_{\Delta,\Delta}$-component, our second main result strengthens the general simple-graph bound $\Delta^2-1$ to $\Delta(\Delta-1)$ and extends it to loopless multigraphs.

\begin{theorem}[Sharp gap theorem]\label{thm:main}
Let $\Delta\ge3$ be an integer.  If $G$ is a finite loopless multigraph with $\Delta(G)\le\Delta$ and no component isomorphic to the simple graph $K_{\Delta,\Delta}$, then
\[
  \qB(G)\le\Delta(\Delta-1).
\]
\end{theorem}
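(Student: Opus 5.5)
We argue by contradiction using a minimal counterexample, extending a partial coloring greedily and handling the few tight configurations separately. Take a counterexample $G$ with $|E(G)|$ minimum. Then $G$ is connected, since components are colored independently and no component is $K_{\Delta,\Delta}$. For a loopless multigraph, parallel edges between $x$ and $y$ are pairwise incident, and a $4$-cycle is a closed walk $x_1x_2x_3x_4$ on four distinct vertices. We count $B$-conflicts of an edge $e=uv$ as follows. The edges incident with $u$ or $v$ contribute at most $2(\Delta-1)$ others, counting parallel copies. An edge opposite to $e$ on a $4$-cycle has the form $u'v'$ with $u'\in N(u)\setminus\{v\}$ and $v'\in N(v)\setminus\{u\}$, $u'\neq v'$. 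Summed over the multiplicities of the edges $uu'$, and bounding the edges at $u'$ not going to $u$, this gives at most $(d(u)-\mu(uv))(\Delta-1)$ such edges, and fewer once multiplicities are taken into account. Hence $e$ has at most
\[
  2(\Delta-1)+(\Delta-1)^2=\Delta^2-1
\]
conflicts, with equality only in a rigid situation: $e$ simple, $d(u)=d(v)=\Delta$, every neighbor of $u$ other than $v$ of degree $\Delta$ with all its other edges going into $N(v)\setminus\{u\}$ through simple edges, and symmetrically. We need the bound $\Delta^2-\Delta-1$ to color $e$ greedily. So the plan is to find an edge $e$ whose conflict set, restricted to the already colored edges $E(G)-e$ (colored by minimality), sees at most $\Delta^2-\Delta-1$ distinct colors. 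The slack can come from two sources: fewer conflicting edges, or conflicting edges forced to repeat colors.

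\textbf{Step 1 (sparse or multi edges).} If $G$ has a parallel pair, or a vertex of degree less than $\Delta$, or an edge $uv$ whose neighborhoods are not ``almost complete bipartite'', then the count above drops by at least $\Delta$ for a suitably chosen edge, which finishes this case. For a parallel class of multiplicity $\mu\ge 2$ between $u$ and $v$, we delete one copy $e$. Only $d(u)-\mu$ further edges leave $u$, so the opposite-edge count loses at least $(\mu-1)(\Delta-1)\ge\Delta-1$. We recover the last unit by observing that the sibling parallel edge itself already lies among the incident conflicts and is not double counted. Low-degree vertices are handled the same way: pick $e$ at a vertex of degree at most $\Delta-1$; if the saving is only $\Delta-1$, choose instead an edge at distance one so that the saving appears in both factors.

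\textbf{Step 2 (dense regular simple case).} We may now assume $G$ is simple and $\Delta$-regular, and that every edge is nearly tight. Fix an edge $uv$, and let $A=N(v)$ and $B=N(u)$. The quantity $(\Delta-1)^2$ minus the number of opposite edges counts the ``missing'' edges between $A\setminus\{u\}$ and $B\setminus\{v\}$, plus those going outside or lying inside $A$ or $B$, or coinciding across a triangle. If this deficiency is at least $\Delta$ for some edge, we color that edge last and are done. Otherwise every edge lies in a configuration missing fewer than $\Delta$ edges from $K_{\Delta,\Delta}$ on $A\cup B$. By regularity and connectivity, the missing edges must be compensated by edges leaving $A\cup B$. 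We then propagate this structure along edges. One then shows $G$ is a $K_{\Delta,\Delta}$ (contradiction), or $G$ is $K_{\Delta,\Delta}$ with a few edges rerouted through vertices outside, for instance by switching two edges to a new bipartite block.

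\textbf{Step 3 (the obstacle).} The main difficulty is this near-extremal structure, where greedy counting gives only $\Delta^2-1$ or $\Delta^2-2$ and we need $\Delta^2-\Delta-1$. There we would instead save on colors. Choose a nonadjacent pair $x,y$ in the near-$K_{\Delta,\Delta}$ block, for instance the ends of a removed edge, and delete two edges $e_1,e_2$ incident to them. Color $G-e_1-e_2$ by minimality. Then recolor so that two edges in the conflict set of $e_1$ share a color; such a pair exists when the two edges are non-conflicting, as with the two endpoints of a missing edge. This is a Kempe-type swap in $\Gamma_B$. Alternatively, and more simply for this case, we would write down an explicit $B$-coloring of $K_{\Delta,\Delta}-e$ and of the few rerouted variants with $\Delta(\Delta-1)$ colors, using a Latin-square coloring $c(a_i b_j)=i+j\pmod \Delta$ adjusted row by row, and then extend it to the rest of $G$ greedily from the outside vertices, whose edges have slack. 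I expect verifying that the near-extremal configurations are exhausted, and that $\Delta\ge3$ suffices for the explicit colorings, to be the longest part.
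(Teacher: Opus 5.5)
Your plan depends on an extension step that does not work for $B$-colorings. You delete a simple edge $e=uv$, take a $B$-coloring of $G-e$ by minimality, and give $e$ a color missing from its conflict set. Now suppose $uvabu$ is a $4$-cycle of $G$. The edges $va$ and $bu$ are opposite on it. In $G-e$, however, they are not incident, and they need not be opposite on any $4$-cycle. So the coloring of $G-e$ may give them the same color, and then no color for $e$ makes $uvabu$ rainbow. Bounding the conflicts of $e$ by $\Delta^2-\Delta-1$ controls only the color of $e$. It does not make the other three edges of each $4$-cycle through $e$ pairwise distinct.

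Your parallel-edge step survives, because a sibling edge $e_0$ keeps every such $4$-cycle present in $G-e$; this is exactly the paper's argument. The low-degree step and Step~2 (``color that edge last'') fail. The missing idea is to uncolor the whole star $S_u$ at one endpoint and recolor its edges with pairwise distinct colors from lists $L_x$, using Hall's theorem. Then in any $4$-cycle $uxayu$ the two new edges $ux,uy$ differ, and the two old edges $xa,ay$ are incident. This also changes what you must find in the regular case. One edge with deficit $D(uv)\ge\Delta$ is not enough. You need a vertex $u$ with $D(uv)\ge\Delta$ for one neighbor $v$ and $D(ux)\ge\Delta-1$ for every other neighbor $x$, so that Hall's condition holds for the $\Delta$ lists.

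Separately, Steps~2 and~3 are not yet an argument. You assert without proof that if every edge has deficiency below $\Delta$, then $G$ is $K_{\Delta,\Delta}$ or $K_{\Delta,\Delta}$ with ``a few edges rerouted.'' The Kempe-type swap and the adjusted Latin-square colorings are neither specified nor checked. Extending them greedily to the rest of $G$ also runs into the same $4$-cycle problem.

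This is where the paper does its real work, and it splits into two cases:
\begin{itemize}
\item A $\Delta$-regular bipartite graph is colored globally. The paper decomposes $E(G)$ into $\Delta$ matchings by K\H{o}nig's theorem. Within each matching it colors the graph of opposite pairs with $\Delta-1$ colors by Brooks' theorem. Brooks applies because that graph has maximum degree at most $\Delta-1$, and a $K_\Delta$ in it would give a $K_{\Delta,\Delta}$ component.
\item A nonbipartite graph is handled through the weighted false-twin quotient. The identity $D(X,Y)=\Delta-w(Y)+R(X,Y)$ and a shortest-odd-cycle argument produce the required endpoint.
\end{itemize}
The Brooks step needs $\Delta\ge4$, since odd-cycle components need $3$ colors. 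The paper imports the case $\Delta=3$ from Xue--Hu--Kong, and your plan does not address it.
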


For $\Delta=3$, \cref{thm:main} follows from a theorem of Xue, Hu, and Kong \cite{XueHuKong2026}.  The bound is attained by $K_{\Delta,\Delta-1}$ and by $K_{\Delta,\Delta}-e$.  Thus no finite loopless multigraph $G$ with $\Delta(G)\le\Delta$ has $\qB(G)$ strictly between $\Delta^2-\Delta$ and $\Delta^2$.

The degeneracy--codegree bound follows from a degeneracy ordering of $G$ and the induced ordering of $E(G)$.  Applying this ordering to vertex-critical subgraphs of $\Gamma_B(G)$ yields the near-equality conclusions and the refined second-level bound for $3$-degenerate graphs. For the gap at the dense endpoint, the proof reduces a minimal counterexample to a connected $\Delta$-regular simple graph. In the bipartite case, K\H{o}nig's line-coloring theorem \cite{BondyMurty2008} decomposes $E(G)$ into at most $\Delta$ matchings, and Brooks' theorem \cite{Brooks1941} colors the auxiliary
conflict graph on each matching with at most $\Delta-1$ colors.  In the nonbipartite case, the weighted quotient by false-twin classes, together with an analysis of a shortest odd cycle, yields a clean endpoint.  Hall's theorem then supplies pairwise distinct available colors for the edges incident with this endpoint, completing the extension.

Throughout, multigraphs are loopless and may have parallel edges.  Degrees in a multigraph are counted with multiplicity, whereas $N_G(v)$ denotes the set of distinct neighbors of $v$.  A $4$-cycle has four distinct vertices.  In a multigraph, different choices of parallel edges may determine distinct $4$-cycles. For distinct vertices $u$ and $v$ of a multigraph $G$, let $\mu_G(u,v)$ denote the number of edges joining them.  For $F\subseteq E(G)$, let $G-F$ denote the spanning subgraph obtained by deleting the edges in $F$.

\section{Degeneracy--codegree bounds and stability}
\label{sec:degenerate}

Throughout this section, all graphs are simple.  The following lemma bounds the later $B$-conflicts in the edge order induced by a degeneracy ordering.

\begin{lemma}\label{lem:later-conflict}
Let $d\ge1$ be an integer, and let $G$ be a $d$-degenerate graph with degeneracy ordering $v_1,\ldots,v_n$.  Order $E(G)$ by increasing index of the earlier endpoint, breaking ties arbitrarily.  For an edge $e=v_i u_1$, where $u_1$ is later than $v_i$, let $u_1,\ldots,u_r$ be the later neighbors of $v_i$.  Then the number of edges following $e$ in this ordering that $B$-conflict with $e$ is at most
\[
  d_G(u_1)-1+
  \sum_{j=2}^{r}|N_G(u_1)\cap N_G(u_j)|.
\]
\end{lemma}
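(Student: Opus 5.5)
We prove the bound by sorting the edges that follow $e$ and $B$-conflict with $e$ according to where they meet $e$, and charging each class to one term of the bound. Two facts about the ordering are used throughout. Write $\prec$ for the edge order. An edge $f=xy$ satisfies $e\prec f$ only if the earlier endpoint of $f$ has index at least $i$. The later neighbors of $v_i$ in the degeneracy ordering are exactly $u_1,\ldots,u_r$.

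First consider a conflicting edge $f$ incident with $e$. If $f$ contains $u_1$, there are at most $d_G(u_1)-1$ such edges, counting every edge at $u_1$ other than $e$ regardless of position; this gives the first term. Otherwise $f=v_iw$ with $w\neq u_1$. If $w$ were earlier than $v_i$, the earlier endpoint of $f$ would be $w$, whose index is less than $i$, so $f\prec e$. Hence $w$ is a later neighbor of $v_i$, that is, $w=u_j$ for some $j\in\{2,\ldots,r\}$. So these edges are among $v_iu_2,\ldots,v_iu_r$, at most one for each $j\ge2$.

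Next consider a conflicting edge $f$ opposite to $e$ on a $4$-cycle. Then the $4$-cycle is $v_iu_1xy$ and $f=xy$, where $y\in N_G(v_i)$, $x\in N_G(u_1)$, and $v_i,u_1,x,y$ are distinct. Since $e\prec f$, both $x$ and $y$ have index at least $i$. Neither equals $v_i$, so both are later than $v_i$. In particular $y$ is a later neighbor of $v_i$ different from $u_1$, so $y=u_j$ for some $j\ge2$. Also $x\in N_G(u_1)\cap N_G(u_j)$, and $x\ne v_i$. Since $v_i$ itself lies in $N_G(u_1)\cap N_G(u_j)$, for a fixed $j$ there are at most $|N_G(u_1)\cap N_G(u_j)|-1$ choices of $x$, and hence at most that many opposite edges $f=u_jx$.

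Now combine, for each fixed $j\ge2$, the single incident edge $v_iu_j$ with the at most $|N_G(u_1)\cap N_G(u_j)|-1$ opposite edges of the form $u_jx$. Together they contribute at most $|N_G(u_1)\cap N_G(u_j)|$. Summing over $j=2,\ldots,r$ and adding the $d_G(u_1)-1$ edges at $u_1$ gives the stated bound; possible overlaps between the classes only make the count smaller. The step that needs the most care is this bookkeeping: the incident edge $v_iu_j$ must be absorbed into the codegree term, which works only because $v_i$ is itself a common neighbor of $u_1$ and $u_j$ and is excluded as a choice of $x$.
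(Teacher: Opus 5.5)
Your proposal is correct and follows essentially the same argument as the paper. Both split the conflicting edges into those at $u_1$, the star edges $v_iu_j$ with $j\ge2$, and opposite edges whose endpoint adjacent to $v_i$ must be some later $u_j$, with the other endpoint a common neighbor of $u_1$ and $u_j$ distinct from $v_i$. The only difference is bookkeeping: you absorb each star edge $v_iu_j$ into its own codegree term, whereas the paper adds $r-1$ to $\sum_{j=2}^{r}\bigl(|N_G(u_1)\cap N_G(u_j)|-1\bigr)$, which gives the same total.
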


\begin{proof}
Among the edges following $e$, at most $r-1$ are incident with $v_i$, and at most $d_G(u_1)-1$ are incident with $u_1$.

Suppose that an edge $f$ following $e$ is opposite to $e$ on a $4$-cycle $v_i u_1 w z v_i$, where $f=wz$.  If $z$ preceded $v_i$, then $f$ would lie in a block preceding that of $e$.  Hence $z=u_j$ for some $j\in\{2,\ldots,r\}$.  For fixed $j$, the vertex
$w$ is a common neighbor of $u_1$ and $u_j$ other than $v_i$, so there are at most $|N_G(u_1)\cap N_G(u_j)|-1$ possibilities for $f$.

Summing over $j$, possibly with repetitions, and adding the two incident-edge bounds gives
\begin{align*}
 &(r-1)+(d_G(u_1)-1)
 +\sum_{j=2}^{r}
   \bigl(|N_G(u_1)\cap N_G(u_j)|-1\bigr)\\
 &\qquad=
 d_G(u_1)-1+
 \sum_{j=2}^{r}|N_G(u_1)\cap N_G(u_j)|.\qedhere
\end{align*}
\end{proof}

\begin{proof}[Proof of \Cref{thm:deg-codeg-intro}]
For each edge $e=v_i u_1$, \cref{lem:later-conflict} gives
\[
\begin{aligned}
 d_G(u_1)-1+
 \sum_{j=2}^{r}|N_G(u_1)\cap N_G(u_j)|
 &\le \Delta-1+(r-1)\Dtwo(G)\\
 &\le \Delta-1+(d-1)\Dtwo(G).
\end{aligned}
\]
Hence the induced ordering of $V(\Gamma_B(G))=E(G)$ shows that $\Gamma_B(G)$ is $\bigl(\Delta-1+(d-1)\Dtwo(G)\bigr)$-degenerate.  Greedy coloring in reverse order therefore yields
\[
  \qB(G)=\chi(\Gamma_B(G))
  \le \Delta+(d-1)\Dtwo(G)
  \le d\Delta,
\]
where the final inequality follows from $\Dtwo(G)\le\Delta$.
\end{proof}

The same edge ordering also yields a stability statement near the bound $\qB(G)\le d\Delta$.

\begin{theorem}
\label{thm:near-core}
Let $d$, $\Delta$, and $s$ be integers satisfying
$1\le d\le\Delta$ and $0\le s<\Delta$.  Let $G$ be a
$d$-degenerate graph with $\Delta(G)\le\Delta$.  If
$\qB(G)\ge d\Delta-s$, then there is a vertex $v$ with distinct
neighbors $u_1,\ldots,u_d$ such that
\begin{equation}
\label{eq:deficiency-certificate}
 d\bigl(\Delta-d_G(u_1)\bigr)
 +\sum_{j=2}^{d}|N_G(u_1)\setminus N_G(u_j)|
 \le s.
\end{equation}
Consequently,
\[
  \left|\bigcap_{j=1}^{d}N_G(u_j)\right|\ge\Delta-s,
\]
and hence $G$ contains a copy of $K_{d,\Delta-s}$.
\end{theorem}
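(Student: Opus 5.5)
Take a vertex-critical subgraph $H$ of $\Gamma_B(G)$ with $\chi(H)=\qB(G)\ge d\Delta-s$. Every vertex of $H$ then has degree at least $d\Delta-s-1$ in $H$, since otherwise deleting it and recoloring greedily contradicts criticality. Restrict the edge ordering of \cref{lem:later-conflict} to $E(H)$, and let $e=v_iu_1$ be the \emph{last} edge of $E(H)$ in this ordering, with $u_1$ later than $v_i$. All $H$-neighbours of $e$ precede it, so I instead count earlier conflicts. The cleaner route is to pick $e$ as the first edge of $E(H)$. Then all of its at least $d\Delta-s-1$ conflicts in $H$ are later edges, and \cref{lem:later-conflict} gives
\[
 d\Delta-s-1\le d_G(u_1)-1+\sum_{j=2}^{r}|N_G(u_1)\cap N_G(u_j)|,
\]
where $u_1,\dots,u_r$ are the later neighbours of $v_i$ and $r\le d$. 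Put $v=v_i$.

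Next convert this into the deficiency inequality \eqref{eq:deficiency-certificate}. Write $|N_G(u_1)\cap N_G(u_j)|=d_G(u_1)-|N_G(u_1)\setminus N_G(u_j)|$. The right side becomes
\[
 r\,d_G(u_1)-1-\sum_{j=2}^r|N_G(u_1)\setminus N_G(u_j)|.
\]
The inequality then reads
\[
 d\Delta-r\,d_G(u_1)+\sum_{j=2}^r|N_G(u_1)\setminus N_G(u_j)|\le s.
\]
If $r<d$, the left side is at least $d\Delta-(d-1)\Delta\ge\Delta>s$, which is impossible. Hence $r=d$, so $v$ has $d$ distinct later neighbours $u_1,\dots,u_d$. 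Since $d\Delta-d\,d_G(u_1)=d(\Delta-d_G(u_1))$, this is exactly \eqref{eq:deficiency-certificate}.

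Finally use a union bound:
\[
 \Bigl|\bigcap_{j=1}^d N_G(u_j)\Bigr|\ge d_G(u_1)-\sum_{j=2}^d|N_G(u_1)\setminus N_G(u_j)|.
\]
By \eqref{eq:deficiency-certificate} this is at least $d_G(u_1)-s+d(\Delta-d_G(u_1))\ge\Delta-s$, using $d\ge1$ and $d_G(u_1)\le\Delta$. The $d$ vertices $u_j$ together with $\Delta-s$ common neighbours form $K_{d,\Delta-s}$. The two sides are disjoint because $G$ is simple, so no $u_j$ is its own neighbour and no vertex is adjacent to itself.

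The main point of care is the criticality step. The minimum degree of a $k$-critical graph is at least $k-1$, and applying this to the first edge of $H$ requires that the conflicts counted by \cref{lem:later-conflict} in $G$ dominate those in $H$. They do, since $H$ is a subgraph of $\Gamma_B(G)$ and the lemma bounds all later conflicts in $G$. One also has to handle the case where $e$ is a first edge whose earlier endpoint has few later neighbours; this is exactly what the $r<d$ contradiction above disposes of.
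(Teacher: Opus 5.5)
Your proof is correct and is essentially the paper's argument: you take a vertex-critical subgraph of $\Gamma_B(G)$, apply \cref{lem:later-conflict} to its first member in the degeneracy edge ordering, force $r=d$ from $s<\Delta$, rewrite each codegree as $d_G(u_1)-|N_G(u_1)\setminus N_G(u_j)|$, and finish with a union bound. The abandoned opening about the \emph{last} edge should be deleted, and ``$E(H)$'' should read $V(H)\subseteq E(G)$.
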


\begin{proof}
Let $q=\qB(G)$, and let $J$ be an induced subgraph of $\Gamma_B(G)$ of minimum order subject to $\chi(J)=q$.  Then $J$ is $q$-vertex-critical, so $\delta(J)\ge q-1\ge d\Delta-s-1$.

Fix a degeneracy ordering of $G$ and the induced edge ordering from
\cref{lem:later-conflict}.  Let $e=v_i u_1$ be the first member of
$V(J)\subseteq E(G)$ in this edge ordering, with $v_i$ preceding
$u_1$.  Every neighbor of $e$ in $J$ therefore follows $e$.  Write $u_1,\ldots,u_r$ for the later neighbors of $v_i$.  For
$2\le j\le r$, set
\[
  c_j=|N_G(u_1)\cap N_G(u_j)|.
\]By \cref{lem:later-conflict},
\[
  d\Delta-s-1
  \le d_J(e)
  \le d_G(u_1)-1+\sum_{j=2}^{r}c_j.
\]
Rearranging gives
\[
  (d-r)\Delta+\bigl(\Delta-d_G(u_1)\bigr)
  +\sum_{j=2}^{r}(\Delta-c_j)\le s.
\]
Every term on the left is nonnegative.  Since $r\le d$ and
$s<\Delta$, it follows that $r=d$.  Set $v=v_i$.  For
$2\le j\le d$,
\(
\Delta-c_j=\Delta-d_G(u_1)+|N_G(u_1)\setminus N_G(u_j)|.
\)
Thus the preceding inequality is precisely
\eqref{eq:deficiency-certificate}.

Let
\[
  C=\bigcap_{j=1}^{d}N_G(u_j).
\]
A union bound within $N_G(u_1)$ gives
\[
\begin{aligned}
 |C|
 &\ge d_G(u_1)
   -\sum_{j=2}^{d}|N_G(u_1)\setminus N_G(u_j)|\ge\Delta-s,
\end{aligned}
\]
where the last inequality follows from \eqref{eq:deficiency-certificate}, since $d\ge1$ and $\Delta-d_G(u_1)\ge0$.  Moreover,
$C\cap\{u_1,\ldots,u_d\}=\varnothing$, since $u_j\notin N_G(u_j)$ for every $j$.  Every vertex of $C$ is adjacent to every $u_j$, so $G$ contains a copy of $K_{d,|C|}$ and hence of $K_{d,\Delta-s}$.
\end{proof}

\begin{corollary}
\label{cor:deg-equality}
Let $d$ and $\Delta$ be integers with $1\le d\le\Delta$.  The maximum
possible value of $\qB(G)$ among all finite simple $d$-degenerate
graphs $G$ with $\Delta(G)\le\Delta$ is $d\Delta$.  Moreover, for
every such graph $G$, the following are equivalent:
\begin{enumerate}[label=\textup{(\roman*)}]
\item $\qB(G)=d\Delta$;
\item $G$ contains a copy of $K_{d,\Delta}$;
\item $G$ has at least $d$ vertices of degree $\Delta$ with the same
      open neighborhood.
\end{enumerate}
\end{corollary}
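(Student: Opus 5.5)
We prove the maximum statement and the equivalence of (i)--(iii) by chaining \cref{thm:deg-codeg-intro}, \cref{thm:near-core}, and a direct computation on $K_{d,\Delta}$.

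For the maximum, \cref{thm:deg-codeg-intro} gives $\qB(G)\le d\Delta$ for every admissible $G$. The value is attained by $K_{d,\Delta}$, which is $d$-degenerate: every nonempty subgraph has a vertex of degree at most $d$, namely a vertex on the $\Delta$-side if any is present, and otherwise an isolated vertex. Its maximum degree is $\Delta$ since $d\le\Delta$. Thus it suffices to prove (ii)$\Rightarrow$(i), because this yields $\qB(K_{d,\Delta})=d\Delta$.

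For (ii)$\Rightarrow$(i), we show that the $d\Delta$ edges of a copy $H$ of $K_{d,\Delta}$ in $G$ pairwise $B$-conflict. Take two edges $ab$ and $a'b'$ of $H$ with $a,a'$ on the $d$-side and $b,b'$ on the $\Delta$-side. If they share a vertex, they are incident. Otherwise $a\ne a'$ and $b\ne b'$, and $a b a' b' a$ is a $4$-cycle of $G$ on which they are opposite. Hence $\Gamma_B(G)$ contains a clique of order $d\Delta$. Since $B$-conflicts in $H$ remain conflicts in $G$, we get $\qB(G)\ge d\Delta$, and equality follows from \cref{thm:deg-codeg-intro}.

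For (i)$\Rightarrow$(ii) and (iii), apply \cref{thm:near-core} with $s=0$, which is allowed since $0<\Delta$. We obtain $v$ with distinct neighbors $u_1,\dots,u_d$ such that
\[
d\bigl(\Delta-d_G(u_1)\bigr)+\sum_{j=2}^{d}|N_G(u_1)\setminus N_G(u_j)|\le 0 .
\]
Both summands are nonnegative, so each vanishes. Since $d\ge1$, we get $d_G(u_1)=\Delta$, and $N_G(u_1)\subseteq N_G(u_j)$ for every $j$. As $d_G(u_j)\le\Delta=|N_G(u_1)|$, this forces $N_G(u_j)=N_G(u_1)$. Thus $u_1,\dots,u_d$ are $d$ vertices of degree $\Delta$ with the same open neighborhood, which is (iii).

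For (iii)$\Rightarrow$(ii), let $u_1,\dots,u_d$ have common neighborhood $N$ with $|N|=\Delta$. Each $u_j$ is outside $N_G(u_j)=N$, so $\{u_j\}$ and $N$ are disjoint, and together they span a copy of $K_{d,\Delta}$. This closes the cycle (ii)$\Rightarrow$(i)$\Rightarrow$(iii)$\Rightarrow$(ii).

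Almost everything is a direct application of the earlier results. The only steps needing care are checking that $K_{d,\Delta}$ is $d$-degenerate, and the degree argument that upgrades the inclusion $N_G(u_1)\subseteq N_G(u_j)$ to equality.
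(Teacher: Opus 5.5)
Your proof is correct and uses the same ingredients as the paper: the upper bound from \cref{thm:deg-codeg-intro}, the pairwise-conflict clique in a copy of $K_{d,\Delta}$ for (ii)$\Rightarrow$(i), \cref{thm:near-core} with $s=0$ for the converse, and the degree-saturation argument for the false-twin condition. The only difference is the order of implications. You read (iii) directly off \eqref{eq:deficiency-certificate} and close the cycle via (iii)$\Rightarrow$(ii), whereas the paper uses the $K_{d,\Delta}$ conclusion of \cref{thm:near-core} for (i)$\Rightarrow$(ii) and derives (iii) from (ii) using $\Delta(G)\le\Delta$; you also check the $d$-degeneracy of $K_{d,\Delta}$ explicitly, which the paper only asserts.
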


\begin{proof}
The upper bound follows from \cref{thm:deg-codeg-intro}.

If \textup{(i)} holds, then \cref{thm:near-core} with $s=0$ yields
\textup{(ii)}.  Conversely, any two edges of a copy of
$K_{d,\Delta}$ are either incident or opposite on a $4$-cycle.
Thus its $d\Delta$ edges are pairwise $B$-conflicting, so
\textup{(ii)}, together with the upper bound, implies \textup{(i)}.

Suppose that \textup{(ii)} holds, and let $A$ and $B$ be the parts
of such a copy, with $|A|=d$ and $|B|=\Delta$.  Every vertex of $A$
has all $\Delta$ vertices of $B$ as neighbors.  Since
$\Delta(G)\le\Delta$, each vertex of $A$ has open neighborhood
exactly $B$.  Hence \textup{(iii)} holds.

Conversely, suppose that \textup{(iii)} holds.  Choose $d$ such
vertices and let $X$ be the resulting set.  Let $Y$ be their common
open neighborhood.  Then $|Y|=\Delta$ and $X\cap Y=\varnothing$,
since $x\notin N_G(x)=Y$ for every $x\in X$.  Every vertex of $X$
is adjacent to every vertex of $Y$, so $G$ contains a copy of
$K_{d,\Delta}$.  Thus \textup{(ii)} holds.

Finally, $K_{d,\Delta}$ is $d$-degenerate and has maximum degree
$\Delta$.  Hence the value $d\Delta$ is attained.
\end{proof}

For sets $A$ and $B$, write $A\oplus B=(A\setminus B)\cup(B\setminus A)$.

\begin{corollary}
\label{cor:false-twin-stability}
Let $d$, $\Delta$, and $s$ be integers satisfying $1\le d\le\Delta$ and $0\le s<d$.  Let $G$ be a finite simple
$d$-degenerate graph with $\Delta(G)\le\Delta$ and $\qB(G)\ge d\Delta-s$.  Then some vertex $v$ has distinct neighbors
$u_1,\ldots,u_d$ such that $d_G(u_1)=\Delta$ and
\[
  \sum_{j=2}^{d}|N_G(u_1)\setminus N_G(u_j)|\le s,
  \qquad
  \sum_{j=2}^{d}|N_G(u_1)\oplus N_G(u_j)|\le2s.
\]
Consequently, at least $d-s$ of $u_1,\ldots,u_d$ have degree $\Delta$ and the same open neighborhood.  In particular, $G$
contains copies of $K_{d-s,\Delta}$ and $K_{d,\Delta-s}$.
\end{corollary}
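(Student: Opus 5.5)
We apply \cref{thm:near-core} with the given $s$; this is allowed since $s<d\le\Delta$. It supplies a vertex $v$ with distinct neighbors $u_1,\ldots,u_d$ satisfying \eqref{eq:deficiency-certificate}. All terms in \eqref{eq:deficiency-certificate} are nonnegative. The first term is $d(\Delta-d_G(u_1))$, and it is at most $s<d$. Hence $\Delta-d_G(u_1)=0$, that is, $d_G(u_1)=\Delta$. The inequality then reduces to $\sum_{j=2}^{d}|N_G(u_1)\setminus N_G(u_j)|\le s$, which is the first displayed bound.

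For the symmetric-difference bound we use $d_G(u_j)\le\Delta=d_G(u_1)$. This gives
\[
|N_G(u_j)\setminus N_G(u_1)|=d_G(u_j)-|N_G(u_1)\cap N_G(u_j)|\le d_G(u_1)-|N_G(u_1)\cap N_G(u_j)|=|N_G(u_1)\setminus N_G(u_j)|.
\]
So $|N_G(u_1)\oplus N_G(u_j)|\le 2|N_G(u_1)\setminus N_G(u_j)|$, and summing over $j$ yields the bound $2s$.

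For the false-twin conclusion we count the indices $j\in\{2,\ldots,d\}$ with $N_G(u_j)\ne N_G(u_1)$. Each such $j$ contributes at least $1$ to $\sum_{j}|N_G(u_1)\setminus N_G(u_j)|$. This needs care: if $N_G(u_j)\ne N_G(u_1)$ but $N_G(u_1)\setminus N_G(u_j)=\varnothing$, then $N_G(u_1)\subsetneq N_G(u_j)$. That forces $d_G(u_j)>\Delta$, a contradiction. Hence at most $s$ indices are bad. The remaining vertices, $u_1$ together with at least $d-1-s$ good $u_j$, give at least $d-s$ vertices with open neighborhood equal to $N_G(u_1)$. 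Each of them has degree $d_G(u_1)=\Delta$.

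Finally, set $X$ to be these $d-s$ vertices and $Y=N_G(u_1)$. Then $X\cap Y=\varnothing$, and each vertex of $X$ is adjacent to all of $Y$, with $|Y|=\Delta$. As in the proof of \cref{cor:deg-equality}, this yields a copy of $K_{d-s,\Delta}$. The copy of $K_{d,\Delta-s}$ comes directly from \cref{thm:near-core}. The only delicate point is the step showing that a nonempty symmetric difference forces a nonempty one-sided difference $N_G(u_1)\setminus N_G(u_j)$, and it relies on $u_1$ having maximum degree.
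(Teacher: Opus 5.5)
Your proof is correct and follows the paper's argument essentially step for step. You use integrality in the certificate from \cref{thm:near-core} to force $d_G(u_1)=\Delta$, the comparison $d_G(u_j)\le d_G(u_1)$ for the symmetric-difference bound, and the fact that $N_G(u_1)\setminus N_G(u_j)=\varnothing$ forces $N_G(u_j)=N_G(u_1)$ by the degree bound (you phrase this contrapositively, while the paper argues directly from containment).
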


\begin{proof}
Choose $v,u_1,\ldots,u_d$ as in \cref{thm:near-core}.  For
$2\le j\le d$, set
\[
  b_j=|N_G(u_1)\setminus N_G(u_j)|.
\]
By \eqref{eq:deficiency-certificate},
\[
  d\bigl(\Delta-d_G(u_1)\bigr)
  +\sum_{j=2}^{d}b_j
  \le s<d.
\]
Since all terms are nonnegative integers, this forces
$d_G(u_1)=\Delta$ and
\(
  \sum_{j=2}^{d}b_j\le s.
\)
This proves the first asserted inequality.

At most $s$ of $b_2,\ldots,b_d$ are positive.  If $b_j=0$, then
$N_G(u_1)\subseteq N_G(u_j)$.  Since
$d_G(u_1)=\Delta$ and $d_G(u_j)\le\Delta$, equality holds.  Thus,
together with $u_1$, at least $d-s$ of
$u_1,\ldots,u_d$ have degree $\Delta$ and the same open
neighborhood.

For $2\le j\le d$, the degree inequality
$d_G(u_j)\le d_G(u_1)$ gives
\[
  |N_G(u_j)\setminus N_G(u_1)|
  \le |N_G(u_1)\setminus N_G(u_j)|=b_j.
\]
Therefore
\[
  \sum_{j=2}^{d}|N_G(u_1)\oplus N_G(u_j)|
  \le 2\sum_{j=2}^{d}b_j
  \le 2s.
\]

The common open neighborhood of the resulting $d-s$ vertices has
size $\Delta$ and is disjoint from them, so it yields a copy of
$K_{d-s,\Delta}$.  The copy of $K_{d,\Delta-s}$ follows from
\cref{thm:near-core}.
\end{proof}
\color{black}

Together, \cref{thm:near-core,cor:false-twin-stability} prove
\cref{thm:near-core-intro}.

\begin{corollary}
\label{cor:one-defect}
Let $d$ and $\Delta$ be integers with $2\le d\le\Delta$, and let
$G$ be a finite simple $d$-degenerate graph with
$\Delta(G)\le\Delta$.  If $\qB(G)=d\Delta-1$, then there are sets
$X,Y\subseteq V(G)$ and a vertex $u^*\notin X$ such that
$|X|=d-1$, $|Y|=\Delta$, $N_G(x)=Y$ for every $x\in X$, and
\[
  |Y\setminus N_G(u^*)|=1,
  \qquad
  |N_G(u^*)\setminus Y|\le1.
\]
Thus $X$ is a set of pairwise false twins, each of degree $\Delta$.
In particular, $G$ contains a copy of $K_{d,\Delta-1}$.
\end{corollary}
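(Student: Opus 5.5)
Apply \cref{cor:false-twin-stability} with $s=1$; this is allowed because $d\ge2$ gives $s<d$. It yields a vertex $v$ with distinct neighbors $u_1,\dots,u_d$ such that $d_G(u_1)=\Delta$ and $\sum_{j=2}^{d}b_j\le1$, where $b_j=|N_G(u_1)\setminus N_G(u_j)|$. Set $Y=N_G(u_1)$, so $|Y|=\Delta$. As in the proof of \cref{cor:false-twin-stability}, $b_j=0$ forces $N_G(u_j)=Y$. We therefore split according to whether some $b_j$ equals $1$.

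\emph{Case 1: some $b_{j_0}=1$.} Then every other $b_j$ is $0$. Take $X=\{u_1,\dots,u_d\}\setminus\{u_{j_0}\}$ and $u^*=u_{j_0}$. Then $|X|=d-1$, $N_G(x)=Y$ for all $x\in X$, and $u^*\notin X$. We have $|Y\setminus N_G(u^*)|=b_{j_0}=1$. Since $d_G(u^*)\le\Delta=|Y|$, the inequality displayed in the proof of \cref{cor:false-twin-stability} gives $|N_G(u^*)\setminus Y|\le b_{j_0}=1$.

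\emph{Case 2: all $b_j=0$.} Then all $d$ vertices $u_1,\dots,u_d$ have open neighborhood $Y$, so $G$ contains $K_{d,\Delta}$. By \cref{cor:deg-equality} this gives $\qB(G)=d\Delta$, contradicting $\qB(G)=d\Delta-1$. So this case cannot occur. This is the one place where the exact value $d\Delta-1$, rather than the weaker hypothesis $\ge d\Delta-1$, is used.

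Finally, the copy of $K_{d,\Delta-1}$ is built from $X\cup\{u^*\}$ and $Y\cap N_G(u^*)$. The latter set has size $\Delta-1$ and is adjacent to all of $X\cup\{u^*\}$. It is disjoint from $X\cup\{u^*\}$ because no vertex lies in its own neighborhood; here $X\cap Y=\varnothing$ since $N_G(x)=Y$, and $u^*\notin N_G(u^*)$. Alternatively, cite \cref{thm:near-core}. The main point needing care is Case 2, which \cref{cor:deg-equality} settles directly. The disjointness checks are routine.
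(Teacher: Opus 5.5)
Your proof is correct and is essentially the paper's argument: the paper applies \cref{thm:near-core} with $s=1$ and rederives $d_G(u_1)=\Delta$ and $\sum_{j\ge2}b_j\le1$, which is exactly what you import from \cref{cor:false-twin-stability}. It then rules out the all-zero case via the resulting $K_{d,\Delta}$ and takes $u^*$ to be the vertex with $b_j=1$, just as you do. The only cosmetic difference is that the paper bounds $|N_G(u^*)\setminus Y|=d_G(u^*)-(\Delta-1)\le1$ directly, whereas you use the degree-comparison inequality $|N_G(u_j)\setminus N_G(u_1)|\le b_j$; these are the same computation.
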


\begin{proof}
Apply \cref{thm:near-core} with $s=1$, and let
$u_1,\ldots,u_d$ be the resulting vertices.  Set
$Y=N_G(u_1)$.  For $2\le j\le d$, set
\[
  b_j=|N_G(u_1)\setminus N_G(u_j)|.
\]
By \eqref{eq:deficiency-certificate},
\[
  d\bigl(\Delta-d_G(u_1)\bigr)
  +\sum_{j=2}^{d}b_j\le1.
\]
Since $d\ge2$, it follows that $d_G(u_1)=\Delta$ and
\(
  \sum_{j=2}^{d}b_j\le1.
\)

If every $b_j$ were zero, then
$Y\subseteq N_G(u_j)$ for every $j$.  Since $|Y|=\Delta$ and
$d_G(u_j)\le\Delta$, we would have $N_G(u_j)=Y$ for every $j$.
The vertices $u_1,\ldots,u_d$, together with $Y$, would then yield
a copy of $K_{d,\Delta}$, contradicting
$\qB(G)=d\Delta-1$.  Hence exactly one of
$b_2,\ldots,b_d$ equals $1$, and all the others are zero.

Let $u^*$ be the vertex corresponding to the unique positive
$b_j$, and set
\(
  X=\{u_1,\ldots,u_d\}\setminus\{u^*\}.
\)
For every $x\in X$, we have $Y\subseteq N_G(x)$.  The degree bound
therefore gives $N_G(x)=Y$.  Moreover,
\(
  |Y\setminus N_G(u^*)|=1,
\)
so $|Y\cap N_G(u^*)|=\Delta-1$.  Hence
\(
  |N_G(u^*)\setminus Y|
  =d_G(u^*)-(\Delta-1)\le1.
\)

Finally, $Y\cap N_G(u^*)$ consists of $\Delta-1$ common neighbors
of the $d$ vertices in $X\cup\{u^*\}$.  These two sets are disjoint
because $G$ is simple, and hence they form a copy of
$K_{d,\Delta-1}$.
\end{proof}

\begin{corollary}
\label{cor:Kdt-free}
Let $d$, $t$, and $\Delta$ be integers satisfying $1\le d\le\Delta$ and $1\le t\le\Delta$.  If $G$ is a finite simple $d$-degenerate graph with $\Delta(G)\le\Delta$ and contains no copy of $K_{d,t}$, then
$\qB(G)\le(d-1)\Delta+t-1.$
\end{corollary}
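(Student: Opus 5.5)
This is the contrapositive of \cref{thm:near-core}, with the parameter $s$ chosen so that $\Delta-s=t$. Set $s=\Delta-t$. The hypotheses $1\le t\le\Delta$ give $0\le s\le\Delta-1<\Delta$, so $s$ is an admissible value in \cref{thm:near-core}.

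Suppose, for contradiction, that
\[
\qB(G)\ge(d-1)\Delta+t=d\Delta-(\Delta-t)=d\Delta-s.
\]
Then \cref{thm:near-core} applies to $G$. It gives a vertex $v$ with distinct neighbors $u_1,\ldots,u_d$ satisfying
\[
\left|\bigcap_{j=1}^{d}N_G(u_j)\right|\ge\Delta-s=t.
\]
Hence $G$ contains a copy of $K_{d,t}$, which contradicts the hypothesis. Therefore $\qB(G)\le d\Delta-s-1=(d-1)\Delta+t-1$.

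The argument has no real obstacle. The only point to check is the endpoint $t=\Delta$, which gives $s=0$ and is still allowed, since \cref{thm:near-core} only requires $0\le s<\Delta$. At that endpoint the corollary recovers the implication (i)$\Rightarrow$(ii) of \cref{cor:deg-equality}.
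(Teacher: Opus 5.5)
Your proposal is correct and matches the paper's proof: both set $s=\Delta-t$ (admissible since $0\le s<\Delta$), assume $\qB(G)\ge d\Delta-s$, and apply \cref{thm:near-core} to produce a forbidden copy of $K_{d,t}$. The only step you leave implicit, that failure of $\qB(G)\le(d-1)\Delta+t-1$ means $\qB(G)\ge(d-1)\Delta+t$, is immediate from integrality, which the paper states explicitly.
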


\begin{proof}
If the conclusion failed, then, since $\qB(G)$ is an integer, $\qB(G)\ge(d-1)\Delta+t=d\Delta-(\Delta-t)$.
Since $0\le\Delta-t<\Delta$, \cref{thm:near-core} with $s=\Delta-t$ would yield a copy of $K_{d,t}$ in $G$, a contradiction.
\end{proof}

\begin{corollary}
\label{cor:two-degenerate-second}
Let $\Delta\ge2$ be an integer.  If $G$ is a finite simple $2$-degenerate graph with $\Delta(G)\le\Delta$ and contains no copy
of $K_{2,\Delta}$, then $\qB(G)\le2\Delta-1$. This bound is sharp.
\end{corollary}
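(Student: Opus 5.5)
The upper bound is a direct application of \cref{cor:Kdt-free} with $d=2$ and $t=\Delta$. If $G$ is $2$-degenerate with $\Delta(G)\le\Delta$ and contains no copy of $K_{2,\Delta}$, that corollary gives
\[
\qB(G)\le(2-1)\Delta+\Delta-1=2\Delta-1.
\]
Its hypotheses $1\le d\le\Delta$ and $1\le t\le\Delta$ hold because $\Delta\ge2$. Equivalently, one can argue by contradiction from \cref{thm:near-core} with $s=0$: if $\qB(G)\ge2\Delta$, that theorem produces a copy of $K_{2,\Delta}$. The upper bound therefore needs no new work.

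For sharpness I need, for each $\Delta\ge2$, a $2$-degenerate graph with maximum degree at most $\Delta$, no $K_{2,\Delta}$, and $\qB=2\Delta-1$. The natural candidate, following the $K_{3,\Delta-1}$ example in the introduction, is $K_{2,\Delta-1}$. It has only $2\Delta-2$ edges, however, so the lower bound $2\Delta-1$ has to come from a slightly larger graph. Instead I would take a graph containing $2\Delta-1$ pairwise $B$-conflicting edges.

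For $\Delta\ge3$, start with $K_{2,\Delta-1}$ with parts $\{a,b\}$ and $\{y_1,\dots,y_{\Delta-1}\}$, and add the edge $ab$. Then $a$ and $b$ have degree $\Delta$ and each $y_i$ has degree $2$. The graph is $2$-degenerate, since every subgraph has a $y_i$ of degree at most $2$ or is a subgraph of the single edge $ab$. It contains no $K_{2,\Delta}$: the only vertices of degree $\Delta$ are $a$ and $b$, which are adjacent and therefore have only $\Delta-1$ common neighbours, and a $K_{2,\Delta}$ would also need $\Delta$ vertices of degree at least $2$ on the other side, which a count rules out.

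It remains to check conflicts among the $2\Delta-1$ edges. The edge $ab$ is incident with every other edge. Two edges $ay_i$ and $by_j$ with $i\ne j$ are opposite on the $4$-cycle $a y_i b y_j$. Edges sharing $a$, $b$ or some $y_i$ are incident. So all $2\Delta-1$ edges pairwise conflict, and $\qB=2\Delta-1$. For $\Delta=2$ the construction is a triangle, which has $\qB=3=2\Delta-1$, is $2$-degenerate, and is $K_{2,2}$-free, so that case is covered too.

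The main obstacle is getting the sharpness example right. One has to make sure that adding the edge $ab$ creates no forbidden $K_{2,\Delta}$ anywhere, and that the pairwise-conflict check really covers every pair of edges; the upper bound itself is immediate.
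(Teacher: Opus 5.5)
Your proposal is correct and follows the paper's proof. The upper bound is exactly the paper's application of \cref{cor:Kdt-free} with $d=2$ and $t=\Delta$, and your sharpness graph ($K_{2,\Delta-1}$ plus the edge $ab$) is the paper's graph $H$, with the same pairwise-conflict check. The one cosmetic difference is that the paper excludes $K_{2,\Delta}$ simply by noting that $H$ has only $\Delta+1$ vertices. That observation also covers $\Delta=2$ without your separate case.
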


\begin{proof}
The upper bound follows from \cref{cor:Kdt-free} with $d=2$ and
$t=\Delta$.  For sharpness, let $H$ be the graph with vertices
$x,y,z_1,\ldots,z_{\Delta-1}$ and edges $xy,xz_i,yz_i$ for
$1\le i\le\Delta-1$.  The graph $H$ is $2$-degenerate, has maximum
degree $\Delta$, and has $2\Delta-1$ edges.  Any two nonincident
edges have the form $xz_i$ and $yz_j$ with $i\ne j$, and are
opposite on the $4$-cycle $x z_i y z_j x$.  Thus all edges of $H$
are pairwise $B$-conflicting, so $\qB(H)=2\Delta-1$.  Since $H$
has only $\Delta+1$ vertices, it contains no copy of
$K_{2,\Delta}$.
\end{proof}

For $3$-degenerate graphs, the bound from \cref{cor:Kdt-free} can be improved by one color.

\begin{theorem}
\label{thm:three-degenerate-second}
Let $\Delta\ge3$ be an integer.  If $G$ is a finite simple
$3$-degenerate graph with $\Delta(G)\le\Delta$ and contains no copy
of $K_{3,\Delta}$, then $\qB(G)\le3\Delta-2$.
\end{theorem}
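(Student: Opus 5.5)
We argue by contradiction. Suppose that $\qB(G)\ge3\Delta-1$. \Cref{thm:deg-codeg-intro} gives $\qB(G)\le3\Delta$. By \cref{cor:deg-equality}, the value $3\Delta$ would force a copy of $K_{3,\Delta}$, which is excluded. Hence $\qB(G)=3\Delta-1$. Applying \cref{cor:one-defect} with $d=3$ yields:
\begin{itemize}
\item a pair $X=\{x_1,x_2\}$ of false twins of degree $\Delta$ with common neighbourhood $Y$, where $|Y|=\Delta$;
\item a third vertex $u^*\notin X$ with $Y\setminus N_G(u^*)=\{y_0\}$ and $|N_G(u^*)\setminus Y|\le1$.
\end{itemize}
More precisely, I would not use only the conclusion of \cref{cor:one-defect}. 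I would rerun the proof of \cref{thm:near-core} and keep the whole critical configuration. Let $J$ be a $(3\Delta-1)$-vertex-critical induced subgraph of $\Gamma_B(G)$, so $\delta(J)\ge3\Delta-2$. Let $e=vu_1$ be the first edge of $J$ in the degeneracy edge order. The later neighbours of $v$ are $u_1,u_2,u_3$, with $\{u_1,u_2,u_3\}=X\cup\{u^*\}$. Then $v\in Y$, and \cref{lem:later-conflict} holds with slack at most one.

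The next step is an equality analysis of the count in \cref{lem:later-conflict}. At most one unit of slack is available, and that unit is already consumed by the defect of $u^*$. So essentially every edge counted there must actually follow $e$, lie in $J$, and be counted exactly once. The counted edges are:
\begin{itemize}
\item the two edges $vu_2$ and $vu_3$;
\item the $\Delta-1$ other edges at $u_1$;
\item the edges $wu_j$ with $w\in N_G(u_1)\cap N_G(u_j)\setminus\{v\}$.
\end{itemize}
In particular, the following edges all lie in $J$:
\begin{itemize}
\item all edges between $X$ and $Y\setminus\{v\}$;
\item all edges between $u^*$ and $Y\setminus\{v,y_0\}$, possibly up to one exception.
\end{itemize}
Double counting, for example when $u_k\in N_G(u_1)\cap N_G(u_j)$, is then impossible. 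This gives nonadjacency information inside $\{u_1,u_2,u_3\}$.

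I would then derive a contradiction from the degree condition $\delta(J)\ge3\Delta-2$ applied to a second, well-chosen edge of $J$. The natural candidates are the edges $x_iy$ with $y\in Y\setminus N_G(u^*)$, or edges at the vertex $y_0$ missed by $u^*$. For such an edge $f=x_1y$, I would bound $d_{\Gamma_B(G)}(f)$ directly:
\begin{itemize}
\item $\Delta-1$ edges at $x_1$;
\item at most $d_G(y)-1$ edges at $y$;
\item opposite edges $ab$ with $a\in N_G(x_1)=Y$ and $b\in N_G(y)$.
\end{itemize}
The aim is to show that these opposite edges number at most about $\Delta$ plus the few edges of $u^*$. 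This would put the $J$-degree of $f$ below $3\Delta-2$ unless $u^*$ is a false twin of $X$, which is excluded. Where this direct count is too crude, I would use the freedom in choosing the degeneracy ordering and tie-breaking. For instance, I would pick the ordering so that $y_0$, or the neighbour of $u^*$ outside $Y$, appears as early as possible. The first edge of $J$ then has strictly more slack lost, which contradicts $\delta(J)\ge3\Delta-2$ outright.

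I expect the main obstacle to be this final step: converting the tight one-defect configuration into a genuine contradiction. The near-extremal graph $K_{3,\Delta}$ minus an edge has $\qB=3\Delta-1$ only because it contains $K_{3,\Delta}$-like pairwise conflicts. Without a full $K_{3,\Delta}$, the missing edge $u^*y_0$ must destroy at least two conflicts for some edge of $J$. Identifying that edge, and handling the possible extra neighbour of $u^*$ outside $Y$ and the case $\Delta=3$ with small margins, will require a careful case analysis. My first attempt would be the edge $x_1y_0$ together with the edge from $u^*$ to its neighbour outside $Y$, if such a neighbour exists.
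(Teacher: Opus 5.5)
Your reduction to the one-defect configuration and your equality analysis are correct. In fact the slack is exactly zero, not one: for $e=vu_1$ the bound of \cref{lem:later-conflict} evaluates to $(\Delta-1)+\Delta+(\Delta-1)=3\Delta-2$, which is also the lower bound on $d_J(e)$. So every counted edge lies in $J$, the ``one exception'' at $u^*$ does not occur, and in particular both $vu_2$ and $vu_3$ lie in $J$.

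The genuine gap is the final step, which you describe only as an aim, and the edge you propose for it is the wrong one. The edge $x_1y_0$ does not lie in the block of $v$, so you have no control over which of its $B$-conflicts lie in $J$. A direct bound on its degree in $\Gamma_B(G)$ is also unavailable, because nothing in your configuration restricts the neighbours of $y_0$ outside $X$. For instance, each of them may have two neighbours in $Y\setminus\{y_0\}$; this is compatible with $3$-degeneracy and, for larger $\Delta$, with the degree bound, and then $x_1y_0$ conflicts with about $5\Delta$ edges. So the estimate ``about $\Delta$ plus the few edges of $u^*$'' does not follow. The alternative of re-choosing the degeneracy order is never turned into an argument.

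The missing idea is to apply the later-conflict count to the other star edges at $v$. Since $vu_1$ is the first member of $V(J)$ and the block of $v$ is $\{vu_1,vu_2,vu_3\}\subseteq V(J)$, every $J$-neighbour of $vu_i$ other than the two remaining star edges lies in a later block. The argument of \cref{lem:later-conflict} then gives, with $c_{ij}=|N_G(u_i)\cap N_G(u_j)|$,
\[
  3\Delta-2\le d_J(vu_i)\le 2+\bigl(d_G(u_i)-1\bigr)+(c_{ij}-1)+(c_{ik}-1)
\]
for $\{i,j,k\}=\{1,2,3\}$. For $u_i=u^*$ both codegrees are at most $\Delta-1$, since each twin has neighbourhood $Y$ and $|N_G(u^*)\cap Y|=\Delta-1$. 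Hence $d_J(vu^*)\le3\Delta-3$, a contradiction. This is exactly where the missing edge $u^*y_0$ destroys two conflicts, namely those of $vu^*$ with $y_0x_1$ and with $y_0x_2$.

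The paper does precisely this, without passing through \cref{cor:one-defect}. It refines the count in \cref{lem:later-conflict} to the star edges lying in $J$, which shows that $r=3$ and that all three star edges are in $J$. Writing $a_i=\Delta-d_G(u_i)$ and $b_{ij}=\Delta-c_{ij}$, the three inequalities become $a_i+b_{ij}+b_{ik}\le1$. These force $N_G(u_1)=N_G(u_2)=N_G(u_3)$, and hence a copy of $K_{3,\Delta}$.
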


\begin{proof}
Suppose for a contradiction that $\qB(G)\ge3\Delta-1$.  Put
$q=\qB(G)$, and let $J$ be a $q$-vertex-critical induced subgraph
of $\Gamma_B(G)$.  We identify $V(J)$ with the corresponding subset
of $E(G)$.  Then $\delta(J)\ge q-1\ge3\Delta-2$.

Fix a degeneracy ordering of $G$ and the associated edge ordering
from \cref{lem:later-conflict}.  Let $e_1=vu_1$ be the first member
of $V(J)$ in this edge ordering, where $v$ precedes $u_1$.  Write
$u_1,\ldots,u_r$ for the later neighbors of $v$, so $r\le3$.  Let
$h=|\{j\in\{1,\ldots,r\}:vu_j\in V(J)\}|$, and put
$d_i=d_G(u_i)$ and
$c_{ij}=|N_G(u_i)\cap N_G(u_j)|$ for $i\ne j$.

Every neighbor of $e_1$ in $J$ follows $e_1$.  Refining the proof
of \cref{lem:later-conflict} by counting only the $h-1$ other star
edges belonging to $V(J)$ gives
\[
\begin{aligned}
  3\Delta-2\le d_J(e_1)
  &\le(h-1)+(d_1-1)
       +\sum_{j=2}^{r}(c_{1j}-1)\\
  &\le r(\Delta-1)+h-1.
\end{aligned}
\]
If $r\le2$, then
$r(\Delta-1)+h-1\le r\Delta-1\le2\Delta-1<3\Delta-2$, a
contradiction.  Hence $r=3$, and the same bound gives $h\ge2$.

If $h=2$, then
\[
  3\Delta-2
  \le d_1+c_{12}+c_{13}-2
  \le3\Delta-2.
\]
Thus $d_1=c_{12}=c_{13}=\Delta$.  Since
$\Delta(G)\le\Delta$, we obtain
$N_G(u_1)=N_G(u_2)=N_G(u_3)$.  Because $G$ is simple, these three
vertices and their common neighborhood of size $\Delta$ form a
copy of $K_{3,\Delta}$, a contradiction.  Therefore $h=3$.

Set $e_i=vu_i$ for $i\in\{1,2,3\}$.  The block with earlier
endpoint $v$ consists of $e_1,e_2,e_3$, all of which belong to
$V(J)$.  Since $e_1$ is the first member of $V(J)$, every member of
$V(J)\setminus\{e_1,e_2,e_3\}$ follows this block.  Counting the
other two star edges separately and applying the preceding conflict
count to the remaining neighbors of $e_i$ gives
\[
  3\Delta-2\le d_J(e_i)
  \le2+(d_i-1)+(c_{ij}-1)+(c_{ik}-1)
  =d_i+c_{ij}+c_{ik}-1
\]
whenever $\{i,j,k\}=\{1,2,3\}$.

Let $a_i=\Delta-d_i$ and $b_{ij}=\Delta-c_{ij}$.  These are
nonnegative integers, with $b_{ij}=b_{ji}$, and the preceding
inequality is equivalent to
$a_i+b_{ij}+b_{ik}\le1$.  Since
$c_{ij},c_{ik}\le d_i$, we have $b_{ij},b_{ik}\ge a_i$, and hence
$3a_i\le1$.  Therefore $a_i=0$ for every $i$, so
$d_1=d_2=d_3=\Delta$ and
$b_{ij}+b_{ik}\le1$ whenever $\{i,j,k\}=\{1,2,3\}$.

Suppose that some $b_{ij}$ is positive.  By symmetry, let
$b_{12}>0$.  The inequalities for $i=1$ and $i=2$ give
$b_{12}=1$ and $b_{13}=b_{23}=0$.  Since all three vertices have
degree $\Delta$, it follows that
$N_G(u_1)=N_G(u_3)=N_G(u_2)$, contradicting $b_{12}=1$.  Hence
$b_{ij}=0$ for all distinct $i,j$.  Thus $u_1,u_2,u_3$ have the
same open neighborhood of size $\Delta$, again yielding a copy of
$K_{3,\Delta}$, a contradiction.
\end{proof}

The graph $K_{3,\Delta-1}$ satisfies the hypotheses, and its
$3\Delta-3$ edges are pairwise $B$-conflicting.  Hence the largest
possible value under the hypotheses of
\cref{thm:three-degenerate-second} is either $3\Delta-3$ or
$3\Delta-2$.  For $\Delta=3$, \cref{thm:main} gives the exact value
$6$.
\color{black}

\begin{remark}\label{rem:dense-endpoint}
Taking $d=\Delta$ in \cref{cor:deg-equality} shows that every finite simple graph $G$ with $\Delta(G)\le\Delta$ satisfies
$\qB(G)\le\Delta^2$, with equality if and only if $G$ contains a copy of $K_{\Delta,\Delta}$.  Under the degree bound, every vertex of such a copy already has degree $\Delta$, so the copy is a component of $G$.  Thus the equality characterization alone gives $\qB(G)\le\Delta^2-1$ when $G$ has no $K_{\Delta,\Delta}$ component.  For $\Delta\ge3$, \cref{thm:main} sharpens this to $\qB(G)\le\Delta(\Delta-1)$.
\end{remark}

\section{Proof of the sharp gap theorem}
\label{sec:sharp-gap-proof}

We prove \cref{thm:main} by reducing a smallest counterexample to a connected $\Delta$-regular simple graph and then treating the bipartite and nonbipartite cases separately.

\subsection{Parallel edges and star extensions}
\label{sec:star}

\begin{lemma}\label{lem:parallel-edge}
Let $\Delta\ge3$ be an integer, and put $k=\Delta(\Delta-1)$.  Let $G$ be a loopless multigraph with $\Delta(G)\le\Delta$, and let $e=uv$ be one of $m=\mu_G(u,v)\ge2$ parallel edges joining $u$ and $v$.  Every $B$-coloring of $G-e$ with colors from a set of size $k$ extends to a $B$-coloring of $G$ with colors from the same set.
\end{lemma}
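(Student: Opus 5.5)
The plan is a direct greedy extension: show that $e$ has fewer than $k$ $B$-conflicting edges in $G$, so some color of the $k$-set is free for $e$. First I would check that it suffices to give $e$ a color avoiding its conflicts. Incidence between two edges other than $e$ does not change when $e$ is added, and every $4$-cycle of $G$ not using $e$ is already a $4$-cycle of $G-e$. Hence the only new constraints involve $e$: it must differ in color from every edge incident with it, and from every edge opposite to it on a $4$-cycle of $G$.

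Next I would count the incident edges, using the parallel class to save. The $m-1$ other edges joining $u$ and $v$ are incident with both endpoints, so they should be counted once. The number of edges other than $e$ incident with $u$ or $v$ is therefore at most
\[
  (d_G(u)-1)+(d_G(v)-1)-(m-1)\le 2\Delta-1-m .
\]

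Then I would count the opposite edges. Such an edge is $f=wz$ on a $4$-cycle $u\,v\,w\,z\,u$, where $w\in N_G(v)\setminus\{u\}$ and $z\in N_G(u)\setminus\{v\}$. Since $m$ of the at most $\Delta$ edges at $v$ go to $u$, there are at most $\Delta-m$ choices of $w$. Each such $w$ has at least one edge to $v$, so at most $\Delta-1$ edges at $w$ are candidates for $f$. This gives at most $(\Delta-m)(\Delta-1)$ opposite edges; counting with multiplicity only overestimates.

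Finally I would add the two bounds:
\[
  2\Delta-1-m+(\Delta-m)(\Delta-1)=\Delta^2+\Delta-1-m\Delta\le\Delta^2-\Delta-1<k,
\]
using $m\ge2$. Thus at most $k-1$ colors are forbidden, and a free color exists for $e$. The only delicate point is the bookkeeping: the parallel edges must not be double counted, and the saving from $m\ge2$ must enter the $w$-count through $d_G(v)\le\Delta$. Beyond that no obstacle is expected; note that the hypothesis $\Delta\ge3$ is not even needed for this count.
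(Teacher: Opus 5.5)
Your counting is correct and coincides with the paper's. You get the same bound $2\Delta-1-m$ for edges incident with $e$, and the same bound $(\Delta-m)(\Delta-1)$ for opposite edges; you count through $N_G(v)\setminus\{u\}$ where the paper counts through $N_G(u)\setminus\{v\}$. The total is at most $k-1$ forbidden colors.

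The gap is in your first step, the claim that the only new constraints involve $e$. Your two justifications say nothing about the new $4$-cycles themselves. These were that incidences among old edges are unchanged, and that $4$-cycles avoiding $e$ already lie in $G-e$. A $4$-cycle $u\,v\,w\,z\,u$ of $G$ through $e$ is rainbow only if, besides the conditions involving $e$, the two old edges $vw$ and $zu$ get different colors. These edges are not incident, and the only cycle on which you exhibit them as opposite uses $e$. So nothing in your argument forces $\varphi(vw)\ne\varphi(zu)$, and if they coincide, no color for $e$ helps.

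Your reduction never uses $m\ge2$, and for $m=1$ it is false. In $C_4=u\,v\,w\,z\,u$, the coloring $\varphi(vw)=\varphi(zu)=1$, $\varphi(wz)=2$ is a $B$-coloring of the path $C_4-uv$. Any third color avoids all conflicts of $uv$, yet the restored cycle is not rainbow. This is exactly the obstruction the paper notes after the lemma, and the reason it later switches to star extensions.

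The missing idea is a second, structural use of the parallel class. Fix an edge $e_0\ne e$ joining $u$ and $v$. For every $4$-cycle $u\,v\,w\,z\,u$ through $e$, replacing $e$ by $e_0$ gives a $4$-cycle of $G-e$ on which $vw$ and $zu$ are opposite. Hence the given $B$-coloring of $G-e$ already gives the three old edges of that cycle pairwise distinct colors. Only with this observation does choosing a color for $e$ that avoids all its $B$-conflicting edges make every new $4$-cycle rainbow. Once it is added, your proof is the paper's.
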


\begin{proof}
Let $H=G-e$, and fix a $B$-coloring $\varphi$ of $H$.  Since $m\ge2$, some edge $e_0\in E(H)$ also joins $u$ and $v$.  Call an edge of $H$ forbidden for $e$ if it is incident with $u$ or $v$, or is opposite to $e$ on a $4$-cycle of $G$.

The number of edges of $H$ incident with $u$ or $v$ is
\[
\begin{aligned}
  (d_G(u)-1)+(d_G(v)-1)-(m-1)
  &=d_G(u)+d_G(v)-m-1 \\
  &\le 2\Delta-m-1.
  \end{aligned}
\]
Put $A=N_G(u)\setminus\{v\}$.  Every edge of $H$ opposite to $e$ on a $4$-cycle has an endpoint $x\in A$ and is not an edge joining $x$ to $u$.  Hence the number of such edges is at most
\[
\begin{aligned}
  \sum_{x\in A}\bigl(d_G(x)-\mu_G(x,u)\bigr)
 & \le |A|(\Delta-1) \\
 &  \le(\Delta-m)(\Delta-1),
\end{aligned}
\]
where $|A|\le d_G(u)-m\le\Delta-m$.  Possible repetitions in the sum only strengthen this upper bound.  Therefore the total number of forbidden edges is at most
\[
\begin{aligned}
  (2\Delta-m-1)+(\Delta-m)(\Delta-1)
  &=\Delta^2-(m-1)\Delta-1\le\Delta^2-\Delta-1=k-1.
\end{aligned}
\]
Choose a color $\alpha$ appearing on no forbidden edge, and extend $\varphi$ by setting $\varphi(e)=\alpha$.  The resulting coloring is proper.

It remains to check the $4$-cycles containing $e$.  If $C$ is such a cycle, then replacing $e$ by $e_0$ gives a $4$-cycle of $H$. Consequently, the three edges of $C-e$ have pairwise distinct colors.  Each is forbidden for $e$, so none has color $\alpha$. Thus $C$ is rainbow, and the extended coloring is a $B$-coloring of $G$.
\end{proof}

By \cref{lem:parallel-edge}, a smallest counterexample to \cref{thm:main} has no parallel edges.  Hence it suffices to
consider simple graphs throughout the rest of this subsection and in \cref{sec:bipartite,sec:clean}.

For an edge $uv\in E(G)$, put $A_{uv}=N_G(u)\setminus\{v\}$ and $B_{uv}=N_G(v)\setminus\{u\}$.  When $uv$ is fixed, we write simply $A$ and $B$.  Let $\calE_G(A_{uv},B_{uv})$ denote the set of edges with one endpoint in $A_{uv}$ and the other in $B_{uv}$, and put $\gG(uv)=|\calE_G(A_{uv},B_{uv})|$.
Thus $\gG(uv)$ is exactly the number of distinct edges opposite to $uv$ on a $4$-cycle.

Let $\kappaG(uv)$ denote the number of edges that $B$-conflict with $uv$.  Since an edge opposite to $uv$ is incident with neither endpoint of $uv$, $\kappaG(uv)=d_G(u)+d_G(v)-2+\gG(uv)$. Moreover,
\[
\begin{aligned}
  \gG(uv)& \le(d_G(u)-1)(d_G(v)-1), \\ 
  \kappaG(uv)& \le d_G(u)d_G(v)-1.
 \end{aligned}
\]

Deleting a single edge does not in general produce an extendable $B$-coloring.  Indeed, suppose that $uvabu$ is a $4$-cycle.  In a $B$-coloring of $G-uv$, the edges $va$ and $bu$, which are opposite on this cycle in $G$, may receive the same color.  No color assigned to $uv$ can then make the restored cycle rainbow. We therefore remove the entire star at one endpoint and restore all its edges simultaneously.

In a partial coloring, edges whose colors remain fixed are called \emph{old edges}, and uncolored edges are called \emph{new edges}. An old edge $f$ is an \emph{old conflict edge} for a new edge $e$ if $e$ and $f$ $B$-conflict in $G$.  For $u\in V(G)$, put $S_u=\{ux:x\in N_G(u)\}$.

\begin{lemma}\label{lem:star-recoloring}
Let $G$ be a simple graph, let $u\in V(G)$, and let $\mathcal C$ be a set of colors.  Suppose that $\varphi$ is a $B$-coloring of $G-S_u$ with colors from $\mathcal C$.  For each $x\in N_G(u)$, let $L_x\subseteq\mathcal C$ be the set of colors not used on any old conflict edge for $ux$. If the family $\{L_x\colon x\in N_G(u)\}$ admits a system of distinct representatives, then $\varphi$ extends to a $B$-coloring of $G$ with colors from $\mathcal C$.
\end{lemma}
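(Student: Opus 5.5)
We assign to each new edge $ux$ the representative color $\alpha_x\in L_x$ given by the system of distinct representatives, and check directly that the result $\psi$ is a $B$-coloring of $G$. Since $\varphi$ is already a $B$-coloring of $G-S_u$, it suffices to check the constraints that involve at least one new edge. These are of two kinds: properness at pairs of incident edges, and rainbowness of every $4$-cycle of $G$ that passes through $u$. No $4$-cycle avoiding $u$ involves a new edge, and such cycles are already rainbow under $\varphi$.

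\emph{Properness.} Take two distinct new edges $ux$ and $uy$. They receive distinct colors because the representatives are distinct. Now take a new edge $ux$ and an old edge $f$ incident with it. Since $f$ avoids $u$, the edge $f$ is incident with $x$, so $f$ $B$-conflicts with $ux$ and is an old conflict edge for $ux$. Hence $\varphi(f)\ne\alpha_x$ by the definition of $L_x$.

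\emph{$4$-cycles through $u$.} Let $C=u\,x\,a\,y\,u$ be a $4$-cycle, with $x,y\in N_G(u)$ distinct and $a\ne u$. Its edges are the new edges $ux$ and $uy$ and the old edges $xa$ and $ay$. We compare the six pairs of edges in turn.
\begin{itemize}
\item The pair $ux,uy$ is distinct by the SDR property.
\item The pair $xa,ay$ consists of incident old edges, which are distinct because $\varphi$ is proper.
\item The pairs $ux,xa$ and $uy,ay$ are incident new/old pairs, handled above.
\item The pair $ux,ay$ is opposite on $C$, so $ay$ is an old conflict edge for $ux$. Hence $\alpha_x\ne\varphi(ay)$.
\item Symmetrically, $uy$ and $xa$ are opposite on $C$, so $\alpha_y\ne\varphi(xa)$.
\end{itemize}
All four colors on $C$ are therefore distinct, and $C$ is rainbow.

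This completes the verification, so $\psi$ extends $\varphi$ to a $B$-coloring of $G$ with colors from $\mathcal C$. Simplicity of $G$ ensures that $S_u$ is exactly the set of edges at $u$ and that the cycle description above is exhaustive. The only step requiring care is the case analysis of $4$-cycles through $u$: such a cycle contains exactly two edges of $S_u$, and these two edges are adjacent on the cycle rather than opposite. Because of this, every cross constraint between a new edge and an old edge is either an incidence or an opposition, and both are captured by the lists $L_x$, while the single new/new constraint is captured by the distinctness of the representatives.
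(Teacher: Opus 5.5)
Your proof is correct and follows the paper's argument essentially step for step. You assign the distinct representatives to the star edges, obtain properness because any old edge meeting $ux$ meets it at $x$, and verify each $4$-cycle $u\,x\,a\,y\,u$ using distinctness of the representatives together with the incident and opposite conflicts encoded in $L_x$ and $L_y$.
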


\begin{proof}
Choose pairwise distinct colors $\gamma_x\in L_x$ and assign $\gamma_x$ to $ux$ for each $x\in N_G(u)$.  This extension is proper: the edges of $S_u$ receive distinct colors, and every old edge incident with $ux$ is incident with $x$ and has color different from $\gamma_x$.

Every $4$-cycle avoiding $u$ lies in $G-S_u$ and is already rainbow. Let $u x a y u$ be a $4$-cycle containing $u$.  The old edges $xa$ and $ay$ have distinct colors because they are incident, while the new edges $ux$ and $uy$ have distinct colors by the choice of the $\gamma_x$.  Moreover, $\gamma_x$ differs from both $\varphi(xa)$ and $\varphi(ay)$, since $xa$ is incident with $ux$ and $ay$ is opposite to $ux$.  Similarly, $\gamma_y$ differs from both old colors.  Thus the cycle is rainbow.
\end{proof}

\begin{lemma}\label{lem:low-degree-star}
Let $\Delta\ge3$ be an integer, put $k=\Delta(\Delta-1)$, and let $\mathcal C$ be a set of $k$ colors.  Let $G$ be a simple graph with $\Delta(G)\le\Delta$, and let $u\in V(G)$ with $t=d_G(u)\le\Delta-1$.  Every $B$-coloring of $G-S_u$ with colors
from $\mathcal C$ extends to a $B$-coloring of $G$ with colors from $\mathcal C$.

Moreover, if $uv\in E(G)$ and $G-uv$ has a $B$-coloring with colors from $\mathcal C$, then the edges $ux$, where
$x\in N_G(u)\setminus\{v\}$, may be uncolored and all edges of $S_u$ recolored to obtain a $B$-coloring of $G$ without changing the colors outside $S_u$.
\end{lemma}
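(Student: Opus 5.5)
Our plan is to apply \cref{lem:star-recoloring} and check Hall's condition by bounding the size of each list $L_x$ from below.

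\textbf{Counting old conflict edges.} Fix $x\in N_G(u)$. The old conflict edges for $ux$ are of two kinds.
\begin{itemize}
\item The old edges at $x$. There are at most $d_G(x)-1\le\Delta-1$ of them; the edge $ux$ itself is new.
\item The old edges opposite $ux$ on a $4$-cycle $uxay u$, with $y\in N_G(u)\setminus\{x\}$. Such an edge has the form $ay$ with $a\ne u$. For each of the $t-1$ choices of $y$ there are at most $\Delta-1$ choices of $a$.
\end{itemize}
Edges at $u$ are all new and are not counted. Hence the number of old conflict edges for $ux$ is at most
\[
(\Delta-1)+(t-1)(\Delta-1)=t(\Delta-1).
\]
It follows that
\[
|L_x|\ge k-t(\Delta-1)=(\Delta-t)(\Delta-1)\ge\Delta-1\ge t.
\]
The first inequality uses $t\le\Delta-1$. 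The last uses $\Delta-1\ge t$ directly.

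\textbf{First assertion.} Every list has size at least $t$, and there are exactly $t$ lists. So Hall's condition holds trivially: any $j\le t$ lists have union of size at least $t\ge j$. Equivalently, we can choose the representatives greedily one at a time. \cref{lem:star-recoloring} then gives the extension.

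\textbf{Second assertion.} Start from a $B$-coloring of $G-uv$ with colors from $\mathcal C$ and delete the colors on all edges of $S_u\setminus\{uv\}$. The edge $uv$ is absent from $G-uv$, so what remains is a $B$-coloring of $G-S_u$ with colors from $\mathcal C$. Its restriction is still a $B$-coloring, because the $4$-cycles of $G-S_u$ are $4$-cycles of $G-uv$. The first part then colors all of $S_u$, and no color outside $S_u$ changes.

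\textbf{Where care is needed.} The only delicate point is the count in the second bullet. We must not double-count conflicts, and we must confirm that edges at $u$ never arise as old conflicts. Repetitions in the count only help, so the bound $t(\Delta-1)$ is safe. Hall's condition is immediate because $|L_x|\ge t$ for every $x$.
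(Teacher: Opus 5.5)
Your proposal is correct and follows the paper's route. You bound the old conflict edges for each $ux$ by $t(\Delta-1)$ to get $|L_x|\ge(\Delta-t)(\Delta-1)\ge t$, verify Hall's condition (or choose greedily) and apply \cref{lem:star-recoloring}, and obtain the second assertion by restricting a coloring of $G-uv$ to $G-S_u$. The only cosmetic difference is that you close with $(\Delta-t)(\Delta-1)\ge\Delta-1\ge t$, whereas the paper uses $(\Delta-t)(\Delta-1)-t=\Delta(\Delta-1-t)\ge0$.
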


\begin{proof}
The assertion is immediate when $t=0$, so assume $t\ge1$.  Fix $x\in N_G(u)$, and let $L_x$ be the set of colors available for $ux$ as in \cref{lem:star-recoloring}.  At most $d_G(x)-1$ old conflict edges are incident with $x$.  Every old edge opposite to $ux$ has the form $ay$ on a $4$-cycle $u x a y u$, where $a\in N_G(x)\setminus\{u\}$ and $y\in N_G(u)\setminus\{x\}$.  Hence there are at most $(d_G(x)-1)(t-1)$ such edges.  It follows that
\[
  |L_x|
  \ge k-t\bigl(d_G(x)-1\bigr)
  \ge(\Delta-t)(\Delta-1)
  \ge t,
\]
where the last inequality follows from $(\Delta-t)(\Delta-1)-t=\Delta(\Delta-1-t)\ge0$.

There are $t$ lists $L_x$, each of size at least $t$.  The union of any nonempty subfamily therefore has size at least the number of its members.  By Hall's theorem, the family $\{L_x\colon x\in N_G(u)\}$ admits a system of distinct representatives.  The result now follows from \cref{lem:star-recoloring}.

For the final assertion, start with a $B$-coloring of $G-uv$ and uncolor the edges $ux$ for $x\in N_G(u)\setminus\{v\}$.  Its restriction to $G-S_u$ is a $B$-coloring with colors from $\mathcal C$, so the first part recolors all edges of $S_u$.
\end{proof}

The regular case requires a sharper list-size estimate.  If $G$ is $\Delta$-regular, define the \emph{deficit} of an edge $uv$ by
$D(uv)=(\Delta-1)^2-\gG(uv)$.

\begin{lemma}\label{lem:clean-extension}
Let $\Delta\ge3$ be an integer, put $k=\Delta(\Delta-1)$, and let $\mathcal C$ be a set of $k$ colors.  Let $G$ be a $\Delta$-regular simple graph.  Suppose that adjacent vertices $u$ and $v$ satisfy $D(uv)\ge\Delta$ and, for every $x\in N_G(u)\setminus\{v\}$, $D(ux)\ge\Delta-1$. If $G-uv$ has a $B$-coloring with colors from $\mathcal C$, then $G$ has a $B$-coloring with colors from $\mathcal C$.
\end{lemma}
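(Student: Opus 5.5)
The plan is to apply \cref{lem:star-recoloring} at the vertex $u$. Start from a $B$-coloring of $G-uv$ with colors from $\mathcal C$ and uncolor every edge $ux$ with $x\in N_G(u)\setminus\{v\}$. The result is a coloring of $G-S_u$. It is still a $B$-coloring, because every $4$-cycle of $G-S_u$ is a $4$-cycle of $G-uv$ and adjacency is inherited. It therefore suffices to show that the lists $L_x$, for $x\in N_G(u)$, have a system of distinct representatives.

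\textbf{Step 1: bound each list.} Fix $x\in N_G(u)$. The old conflict edges for $ux$ are of two kinds.
\begin{itemize}
\item Edges at $x$ other than $ux$. There are exactly $\Delta-1$ of them, by regularity. None lies in $S_u$, because $G$ is simple.
\item Edges opposite to $ux$ on a $4$-cycle. By definition there are exactly $\gG(ux)$ of them. None is incident with $u$, so all are old.
\end{itemize}
Possible overlaps only help, so the number of colors forbidden for $ux$ is at most $(\Delta-1)+\gG(ux)$. Hence
\[
  |L_x|\ge \Delta(\Delta-1)-(\Delta-1)-\bigl((\Delta-1)^2-D(ux)\bigr)=D(ux).
\]
By the hypotheses, $|L_v|\ge\Delta$ and $|L_x|\ge\Delta-1$ for every $x\in N_G(u)\setminus\{v\}$.

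\textbf{Step 2: verify Hall's condition.} There are exactly $\Delta$ lists, since $d_G(u)=\Delta$. Take a subfamily of $m$ lists. If $m\le\Delta-1$, its union contains any single member, which has size at least $\Delta-1\ge m$. If $m=\Delta$, the subfamily contains $L_v$, so its union has size at least $\Delta=m$. Hall's theorem then gives distinct representatives, and \cref{lem:star-recoloring} extends the coloring to a $B$-coloring of $G$ with colors from $\mathcal C$.

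The only delicate point is the count in Step 1. One must check that every edge conflicting with $ux$, apart from edges of $S_u$, is counted either at $x$ or among the $\gG(ux)$ opposite edges. One must also check that the uncolored star edges need not be counted as old conflicts. Both follow because $S_u$ is entirely new and $\kappaG(ux)=2\Delta-2+\gG(ux)$ in a $\Delta$-regular graph. Of the $\kappaG(ux)$ conflicting edges, exactly $\Delta-1$ lie in $S_u$, namely the other edges at $u$, and these are new. That leaves $\Delta-1+\gG(ux)$ old conflict edges, confirming the bound.
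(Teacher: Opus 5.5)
Your proposal is correct and follows the paper's proof: uncolor $S_u\setminus\{uv\}$, bound $|L_x|\ge D(ux)$ by counting the $\Delta-1$ edges at $x$ and the $\gamma_G(ux)$ opposite edges, then verify Hall's condition and apply the star-recoloring lemma. The only difference is that you split the Hall check by the size of the subfamily rather than by whether it contains $L_v$, which is an equivalent variant.
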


\begin{proof}
Let $\varphi$ be a $B$-coloring of $G-uv$ with colors from $\mathcal C$.  Uncolor the edges of $S_u\setminus\{uv\}$.  The
remaining coloring is a $B$-coloring of $G-S_u$.

For each $x\in N_G(u)$, the old conflict edges for $ux$ consist of the $\Delta-1$ edges incident with $x$ other than $ux$ and the $\gG(ux)$ edges opposite to $ux$ on $4$-cycles.  Hence
\[
  |L_x|
  \ge k-(\Delta-1)-\gG(ux)
  =(\Delta-1)^2-\gG(ux)
  =D(ux).
\]
Thus $|L_v|\ge\Delta$, while each of the other $\Delta-1$ lists has size at least $\Delta-1$.  To verify Hall's condition, let
$\varnothing\ne X\subseteq N_G(u)$.  If $v\in X$, then
\[
\left|\bigcup_{x\in X}L_x\right| \ge |L_v|\ge\Delta\ge |X|.
\]
If $v\notin X$, then $|X|\le\Delta-1$, and any $x\in X$ satisfies
\[
  \left|\bigcup_{y\in X}L_y\right|\ge |L_x|\ge\Delta-1\ge |X|.
\]
Thus Hall's condition holds, and \cref{lem:star-recoloring} completes the proof.
\end{proof}

\subsection{The bipartite case}\label{sec:bipartite}

\begin{theorem}\label{thm:bipartite}
Let $\Delta\ge4$ be an integer.  If $G$ is a finite simple bipartite graph with $\Delta(G)\le\Delta$ and no component isomorphic to $K_{\Delta,\Delta}$, then $\qB(G)\le\Delta(\Delta-1)$.
\end{theorem}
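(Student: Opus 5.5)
The intro gives the route: decompose $E(G)$ into at most $\Delta$ matchings by K\H{o}nig's line-coloring theorem, and color each matching's conflict graph with at most $\Delta-1$ colors via Brooks' theorem. The palettes for different matchings are disjoint, so at most $\Delta(\Delta-1)$ colors are used in total.

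We start with reductions. Let $G$ be a counterexample with $|E(G)|$ minimum. Then $G$ is connected, and $G\ne K_{\Delta,\Delta}$ as a component. If some vertex $u$ has $d_G(u)\le\Delta-1$ with $uv\in E(G)$, we color $G-uv$ by minimality; note $G-uv$ has no $K_{\Delta,\Delta}$ component because of its degree deficit. The second part of \cref{lem:low-degree-star} then extends the coloring, so $G$ is $\Delta$-regular. Rather than rely on this, it may be cleaner to argue directly.

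Fix a proper $\Delta$-edge-coloring $M_1,\dots,M_\Delta$ (K\H{o}nig). Two edges of the same matching $M_i$ never share an endpoint, so within $M_i$ the only $B$-conflicts are opposite pairs on $4$-cycles. Let $\Gamma_i$ be the graph on $M_i$ with these adjacencies. It suffices to show $\chi(\Gamma_i)\le\Delta-1$ for each $i$, using disjoint palettes across the $M_i$. This gives a proper coloring, since incident edges lie in different matchings, and every $4$-cycle is rainbow: two of its edges in the same $M_i$ are nonincident, hence opposite, hence adjacent in $\Gamma_i$.

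The degree bound comes from bipartiteness. For $e=uv\in M_i$, any edge opposite $e$ joins $A_{uv}$ to $B_{uv}$ and avoids $u,v$. In the bipartite graph, $A$ lies on $v$'s side and $B$ on $u$'s side. Such an edge $ab\in M_i$ with $a\in A$ is determined by $a$, since $M_i$ is a matching. Hence $\Delta(\Gamma_i)\le|A|\le\Delta-1$. By Brooks' theorem, $\chi(\Gamma_i)\le\Delta-1$ unless some component of $\Gamma_i$ is $K_\Delta$; odd cycles only matter when $\Delta-1=2$, which $\Delta\ge4$ excludes.

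The main obstacle is ruling out a $K_\Delta$ component of some $\Gamma_i$. Suppose edges $a_jb_j$ ($1\le j\le\Delta$) of $M_i$ are pairwise opposite. Then for $j\ne l$ either $a_jb_l$ or $a_lb_j$ is an edge, and degree-saturation forces each $a_j$ to be adjacent to nearly all $b_l$'s. We first count: each $a_jb_j$ needs $\Delta-1$ opposite partners, and each partner needs a distinct neighbor of $a_j$ among $N(a_j)\setminus\{b_j\}$. This forces every $a_j$ to have exactly neighbors $b$-vertices indexed by $\{1,\dots,\Delta\}$ after relabeling, with $N(a_j)\setminus\{b_j\}$ giving the partners. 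Carrying this out, one shows the $2\Delta$ vertices span a $\Delta$-regular subgraph, i.e.\ $K_{\Delta,\Delta}$, which is a component of $G$ — a contradiction.

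If this clean argument fails for a fixed decomposition, I would instead choose the K\H{o}nig decomposition to avoid $K_\Delta$ components, e.g.\ by swapping colors along an alternating path to break a clique. As a further fallback, I would use the flexibility of recoloring one matching's clique with one extra color borrowed from a matching whose conflict graph uses fewer than $\Delta-1$ colors.
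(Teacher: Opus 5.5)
Your proposal is correct and is essentially the paper's proof. It uses K\H{o}nig's decomposition into matchings $M_1,\dots,M_\Delta$, and bounds the maximum degree of the opposite-edge conflict graph on each $M_i$ by $\Delta-1$ via the injection into $N_G(u)\setminus\{v\}$. It then applies Brooks' theorem with $\Delta-1\ge3$, and excludes a $K_\Delta$ component because it would give a $K_{\Delta,\Delta}$ component of $G$.

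The $K_\Delta$ step is simpler than your sketch, and the minimality reductions and fallbacks are unnecessary. Put all $a_j$ in one part. Then opposite edges $a_jb_j$ and $a_lb_l$ can only lie on the $4$-cycle $a_jb_ja_lb_la_j$, so \emph{both} $a_jb_l$ and $a_lb_j$ are edges, not "either" one, and $N_G(a_j)=\{b_1,\dots,b_\Delta\}$ follows at once.
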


\begin{proof}
By K\H{o}nig's line-coloring theorem, partition $E(G)$ into
matchings $M_1,\ldots,M_\Delta$, some of which may be empty.   For each $i$, let
$H_i$ be the graph with vertex set $M_i$ in which two edges are
adjacent when they are opposite on a $4$-cycle of $G$.

Fix a bipartition $(L,R)$ of $G$ and an edge $e=xy\in M_i$, where
$x\in L$ and $y\in R$.  If $f=x'y'\in N_{H_i}(e)$, where
$x'\in L$ and $y'\in R$, then $xy',x'y\in E(G)$.  The map
$f\mapsto x'$ injects $N_{H_i}(e)$ into
$N_G(y)\setminus\{x\}$.  Hence
$d_{H_i}(e)\le d_G(y)-1\le\Delta-1$, and therefore
$\Delta(H_i)\le\Delta-1$.

Moreover, $H_i$ contains no $K_\Delta$.  Suppose otherwise that
$e_j=x_jy_j\in M_i$, $1\le j\le\Delta$, form a clique, where
$x_j\in L$ and $y_j\in R$.  Since $M_i$ is a matching, the vertices
$x_1,\ldots,x_\Delta$ are distinct, as are
$y_1,\ldots,y_\Delta$.  For $j\ne\ell$, adjacency of $e_j$ and
$e_\ell$ gives both $x_jy_\ell,x_\ell y_j\in E(G)$.  Thus these
vertices induce a copy of $K_{\Delta,\Delta}$.  Each of its vertices
already has degree $\Delta$ in this copy, so the maximum-degree
assumption makes the copy a component of $G$, a contradiction.

Let $C$ be a component of $H_i$.  If $C$ is complete, then
$|V(C)|\le\Delta-1$.  If $C$ is an odd cycle, then
$\chi(C)=3\le\Delta-1$.  In every other case, Brooks' theorem gives
$\chi(C)\le\Delta(C)\le\Delta-1$.  Hence
$\chi(H_i)\le\Delta-1$.

For each $i$, choose a proper coloring
$\psi_i\colon V(H_i)\to\{1,\ldots,\Delta-1\}$, and assign to every
$e\in M_i$ the color
\[
  \Phi(e)=(i,\psi_i(e)).
\]
This uses at most $\Delta(\Delta-1)$ colors.  Incident edges belong
to different matchings and hence receive different first
coordinates.  Opposite edges on a $4$-cycle either belong to
different matchings or are adjacent in some $H_i$ and receive
different second coordinates.  Thus every $4$-cycle is rainbow.
\end{proof}

\begin{remark}\label{rem:bipartite-sharp}
The bound in \cref{thm:bipartite} is attained by
$K_{\Delta,\Delta-1}$ and by the connected graph
$K_{\Delta,\Delta}-e$.  Indeed, all
$\Delta(\Delta-1)$ edges of $K_{\Delta,\Delta-1}$ pairwise
$B$-conflict.

For the second example, write its bipartition classes as
\[
  \{a_0,\ldots,a_{\Delta-1}\}
  \quad\text{and}\quad
  \{b_0,\ldots,b_{\Delta-1}\},
\]
and let $a_0b_0$ be the missing edge.  In the $B$-conflict graph,
the $(\Delta-1)^2$ edges $a_ib_j$ with $i,j\ge1$ form a clique
complete to all remaining vertices.  The edges incident with $a_0$
form a clique of order $\Delta-1$, as do those incident with $b_0$.
These two cliques are anticomplete, since a $4$-cycle having
$a_0b_j$ and $a_ib_0$ as opposite edges would require the missing
edge $a_0b_0$.  Therefore
\[
  \qB(K_{\Delta,\Delta}-e)
  =(\Delta-1)^2+(\Delta-1)
  =\Delta(\Delta-1).
\]
\end{remark}

\subsection{A clean endpoint in a regular nonbipartite graph}
\label{sec:clean}

\begin{theorem}\label{thm:clean-endpoint}
Let $\Delta\ge3$ be an integer, and let $G$ be a finite connected nonbipartite $\Delta$-regular simple graph.  Then there exist adjacent vertices $u$ and $v$ such that $D(uv)\ge\Delta$ and every $x\in N_G(u)\setminus\{v\}$ satisfies $D(ux)\ge\Delta-1$.
\end{theorem}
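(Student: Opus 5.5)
The plan is to turn the deficit into a count of non-adjacent pairs and then locate a suitable edge on a shortest odd cycle of the quotient by false twins. The first step is a purely local lower bound. For an edge $uv$ of the $\Delta$-regular graph $G$, write $A=N_G(u)\setminus\{v\}$ and $B=N_G(v)\setminus\{u\}$, so $|A|=|B|=\Delta-1$. Every edge counted by $\gG(uv)$ is produced by at least one adjacent pair $(a,b)\in A\times B$. Hence
\[
  D(uv)\ \ge\ \#\{(a,b)\in A\times B:\ ab\notin E(G)\}\ =\ \sum_{a\in A}|B\setminus N_G(a)|,
\]
where pairs with $a=b$ (from triangles through $uv$) also count as non-adjacent. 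Since $u\in N_G(a)$ and $u\notin B$, we have $|N_G(a)\cap B|\le\Delta-1$. Equality holds iff $N_G(a)=B\cup\{u\}=N_G(v)$, that is, iff $a$ is a false twin of $v$. So every $a\in A$ that is not a false twin of $v$ contributes at least $1$.

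Consequently, $D(uv)\le\Delta-1$ forces at least one vertex of $A$ to be a false twin of $v$, and by symmetry at least one vertex of $B$ to be a false twin of $u$. Similarly, $D(uv)\le\Delta-2$ forces at least two such twins on each side. I will also record that a pair $(a,b)$ for which $a$ and $b$ are not adjacent contributes even when $a$ is "almost" a twin, so partial twins give partial deficit; I will quantify this as needed.

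Next I pass to the quotient $Q$ of $G$ by false-twin classes. False-twin classes are independent sets, since $x\notin N(x)$. The quotient map is a homomorphism $G\to Q$, and $Q$ is isomorphic to an induced subgraph of $G$ obtained by picking one representative per class. Hence $Q$ is nonbipartite because $G$ is. I take a shortest odd cycle $C=[X_0X_1\cdots X_{2\ell}]$ in $Q$; it is an induced cycle. I will choose $u\in X_0$ and $v\in X_1$, possibly after rotating $C$ or choosing $u$ more carefully.

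Suppose $D(uv)\le\Delta-1$. Then some neighbor of $u$ lies in the class $X_1$, and some neighbor of $v$ lies in $X_0$. More importantly, every neighbor of $u$ on $C$ other than $v$ (in class $X_{2\ell}$) contributes non-adjacent pairs unless its class equals $X_1$, which is impossible since $X_{2\ell}\ne X_1$. I expect to derive enough non-twin vertices to exceed $\Delta-1$ by combining this with the fact that $X_{2\ell}$ and $X_2$ are non-adjacent classes in the induced cycle. When $\ell\ge2$ these classes are distinct and non-adjacent, which produces missing pairs. When $\ell=1$ (a triangle) the pair $(w,w)$ with $w\in X_2$ is a missing pair, together with the non-twin contributions.

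The hard part, and the step I expect to be the main obstacle, is the second requirement: $D(ux)\ge\Delta-1$ must hold simultaneously for every $x\in N_G(u)\setminus\{v\}$, not just for the edges of $C$. For this I will argue by contradiction. If $D(ux)\le\Delta-2$, the twin count gives at least two neighbors of $x$ that are false twins of $u$, and at least two neighbors of $u$ that are false twins of $x$. That makes $u$'s class and $x$'s class both large and mutually nearly complete. I would then try to show that this pattern either propagates around $C$ and yields a shorter odd cycle in $Q$, or forces the component to be a blow-up in which the neighbors of $u$ fall into the two classes $X_1\cup X_{2\ell}$ in a way that is incompatible with $\Delta$-regularity. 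If a fixed choice of $u$ fails, I will switch to choosing $u$ on $C$ to minimize the size of its false-twin class, or to maximize $\sum_x D(ux)$, and redo the count. This weighted choice of endpoint is where I expect the real case analysis to lie.
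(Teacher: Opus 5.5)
Your local count is right, but the conclusions you draw from it are off by one, and this hides where the difficulty actually lies. If $Y$ is the false-twin class of $v$, then exactly $|Y|-1$ vertices of $A$ are twins of $v$. Your count therefore gives $D(uv)\ge\Delta-|Y|$ and, symmetrically, $D(uv)\ge\Delta-|X|$ for the class $X$ of $u$. Hence $D(uv)\le\Delta-1$ forces no twin at all, and $D(uv)\le\Delta-2$ forces one twin on each side, not two. Your step ``suppose $D(uv)\le\Delta-1$; then some neighbor of $u$ lies in $X_1$'' is therefore unfounded. The configuration you must actually rule out is a tight edge, $D(uv)=\Delta-1$, between two singleton classes, where there are no twins to use.

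Worse, your inequality $D(uv)\ge\#\{(a,b)\in A\times B: ab\notin E(G)\}$ discards the term $e_G(T)$ of the exact identity $D(uv)=\sum_{a\in A}|N_G(a)\setminus N_G(v)|+e_G(T)$, and that term is indispensable. In $K_{\Delta+1}$ the only non-adjacent pairs in $A\times B$ are the $\Delta-1$ diagonal pairs $(a,a)$. So your bound never certifies $D(uv)\ge\Delta$ on any edge, although in fact $D(uv)=\binom{\Delta}{2}$. Your triangle remark also double counts: the pair $(w,w)$ \emph{is} the non-twin contribution of $w$, not an extra one.

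Conversely, the requirement you call the main obstacle becomes the easy part once the count is corrected.
If the class of $u$ is a singleton, then $D(ux)\ge\Delta-1$ holds for every $x\in N_G(u)$ automatically.
If every class has size at least $m\ge2$, put $u$ in a class $X$ of size exactly $m$. This must be a global minimum over $Q$, not merely a minimum along some cycle. Then for each $Y\in N_Q(X)$ and each $P\in N_Q(Y)\setminus\{X\}$, the set $N_Q(P)\setminus N_Q(X)$ is nonempty and so has weight at least $m$. This gives $D(X,Y)\ge m(\Delta-m)\ge\Delta$ at every edge of $u$, since an odd cycle in $Q$ forces $\Delta\ge 2m\ge4$.

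What is genuinely missing is the singleton case. Assume every edge at every singleton is tight. One then needs the propagation fact that a tight edge $SY$ forces $e_G(T_{SY})=0$ and $\eta(P,S)=1$ for all $P\in N_Q(Y)\setminus\{S\}$; that is, $N_Q(P)$ and $N_Q(S)$ differ by one singleton in each direction. This fact pushes a singleton onto some shortest odd cycle and then all the way around it. It then yields the contradiction: a triangle through a common outside neighbor if the cycle has length at least $5$, and an edge inside $T$, contradicting $e_G(T)=0$, if the cycle is a triangle. None of this appears in your proposal, which stops at ``I would then try'' exactly where the proof has to be done.
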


For the remainder of this subsection, fix $G$ and $\Delta$ as in \cref{thm:clean-endpoint}.  We begin by deriving a local identity for $D$ and translating it to the weighted false-twin quotient. The proof distinguishes the case in which every quotient class has size at least two from the case in which a singleton class exists; the latter is handled by a shortest-odd-cycle argument in the quotient.


\begin{lemma}\label{lem:local-identity}
Let $uv\in E(G)$, and put $A=N_G(u)\setminus\{v\}$, $B=N_G(v)\setminus\{u\}$, $T=A\cap B$.
Then
\[
  D(uv)=
  \sum_{x\in A}|N_G(x)\setminus N_G(v)|+e_G(T),
\]
where $e_G(T)$ denotes the number of edges with both endpoints in
$T$.
\end{lemma}

\begin{proof}
Since $G$ is $\Delta$-regular, $|A|=|B|=\Delta-1$.  The sum
$\sum_{x\in A}|N_G(x)\cap B|$ counts every edge of
$\calE_G(A,B)$ once and every edge with both endpoints in $T$ once
more.  Hence
\[
  \sum_{x\in A}|N_G(x)\cap B|
  =\gG(uv)+e_G(T).
\]

For $x\in A$, we have $u\in N_G(x)\cap N_G(v)$ and
$N_G(v)=B\cup\{u\}$.  Therefore
\[
  |N_G(x)\setminus N_G(v)|
  =\Delta-1-|N_G(x)\cap B|.
\]
Summing over $A$ gives
\[
\begin{aligned}
  \sum_{x\in A}|N_G(x)\setminus N_G(v)|
  &=(\Delta-1)^2-\sum_{x\in A}|N_G(x)\cap B|\\
  &=(\Delta-1)^2-\gG(uv)-e_G(T)\\
  &=D(uv)-e_G(T).
\end{aligned}
\]
Rearranging proves the identity.
\end{proof}


Recall that two distinct vertices are false twins if they have the same open neighborhood.  The equivalence classes under equality of open neighborhoods are called \emph{false-twin classes}.  Let $Q$ be the \emph{quotient} of $G$: its vertices are the false-twin classes, and distinct classes are adjacent in $Q$ if an edge of $G$ joins them.  Assign each $X\in V(Q)$ the positive integer weight $w(X)=|X|$.  We regard $(Q,w)$ as the weighted false-twin quotient of $G$.

Recall that two distinct vertices are false twins if they have the
same open neighborhood.  The equivalence classes under equality of
open neighborhoods are called \emph{false-twin classes}.  Let $Q$ be
the \emph{false-twin quotient} of $G$: its vertices are the
false-twin classes, and distinct classes are adjacent in $Q$ if an
edge of $G$ joins them.  Assign each $X\in V(Q)$ the positive integer
weight $w(X)=|X|$.  With this weighting, $Q$ is the weighted
false-twin quotient of $G$.

\begin{lemma}\label{lem:quotient-properties}
The false-twin quotient $Q$ has the following properties.
\begin{enumerate}[label=\textup{(\roman*)}]
  \item Every false-twin class is an independent set in $G$.
  \item Between two distinct false-twin classes there are either all
        possible edges or no edges.
  \item For every $X\in V(Q)$,
        \[
          \sum_{Y\in N_Q(X)}w(Y)=\Delta.
        \]
  \item Distinct vertices of $Q$ have distinct open neighborhoods
        in $Q$.
  \item The graph $Q$ is nonbipartite.
  \item The graph $Q$ is connected.
\end{enumerate}
\end{lemma}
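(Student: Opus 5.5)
The plan is to verify the six properties in order, using only the definition of false twins, the fact that $G$ is a connected nonbipartite $\Delta$-regular simple graph, and the observation that a vertex is never its own neighbour. Each item feeds the next, so the order matters.

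For (i), suppose $x,y$ lie in the same class and $xy\in E(G)$. Then $y\in N_G(x)=N_G(y)$, which contradicts simplicity. So each class is independent.

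For (ii), let $X\ne Y$ be classes with some edge $xy$, where $x\in X$ and $y\in Y$. For any $x'\in X$ we have $N_G(x')=N_G(x)\ni y$, so $x'$ is adjacent to $y$. Likewise, for any $y'\in Y$, $N_G(y')=N_G(y)\ni x'$. Hence $X$ and $Y$ are completely joined.

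For (iii), fix $x\in X$. By (i) and (ii), $N_G(x)$ is exactly the disjoint union of the classes $Y\in N_Q(X)$; no neighbour of $x$ lies in $X$ itself. Counting gives $\sum_{Y\in N_Q(X)}w(Y)=d_G(x)=\Delta$.

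For (iv), suppose $N_Q(X)=N_Q(Y)$ with $X\ne Y$. Taking $x\in X$ and $y\in Y$, the description in (iii) gives $N_G(x)=\bigcup_{Z\in N_Q(X)}Z=N_G(y)$. Then $x$ and $y$ are false twins, so $X=Y$, a contradiction.

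For (v), suppose $Q$ has a proper $2$-colouring. Colour each vertex of $G$ by the colour of its class. Any edge of $G$ joins two distinct classes: it cannot stay inside one class by (i), and the two classes are then adjacent in $Q$. So this is a proper $2$-colouring of $G$, contradicting that $G$ is nonbipartite.

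For (vi), let $\pi\colon V(G)\to V(Q)$ send each vertex to its class. By (i), every edge $xy$ of $G$ maps to an edge $\pi(x)\pi(y)$ of $Q$. So the image of a walk in $G$ is a walk in $Q$, and connectivity of $G$ transfers to $Q$ because $\pi$ is surjective.

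None of these steps looks like a real obstacle. The one point needing care is (i), since (ii) through (vi) all rely on classes being independent. Simplicity is used there and cannot be dropped, because with loops a vertex could be its own neighbour and (i) would fail.
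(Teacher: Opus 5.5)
Your proposal is correct and follows the paper's proof item by item: the same false-twin membership arguments for (i) and (ii), the decomposition of $N_G(x)$ into the classes of $N_Q(X)$ for (iii) and (iv), lifting a bipartition of $Q$ to $G$ for (v), and projecting paths for (vi). The only difference is that in (iv) the paper first observes that the two classes cannot be adjacent. You skip this step, and it is indeed unnecessary, since the decomposition of $N_G(x)$ already gives $N_G(x)=N_G(y)$ directly.
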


\begin{proof}
If distinct vertices $x$ and $y$ in the same false-twin class were
adjacent, then
$y\in N_G(x)=N_G(y)$, contrary to the simplicity of $G$.  This
proves (i).

For (ii), suppose that $xy\in E(G)$, where $x\in X$ and $y\in Y$.
For any $x'\in X$, we have $y\in N_G(x')=N_G(x)$, so
$x'y\in E(G)$.  For any $y'\in Y$, we then have
$x'\in N_G(y')=N_G(y)$, so $x'y'\in E(G)$.  Thus all possible edges
between $X$ and $Y$ are present.

For $u\in X$, properties (i) and (ii) show that $N_G(u)$ is the
disjoint union of the classes in $N_Q(X)$.  Taking sizes and using
$d_G(u)=\Delta$ proves (iii).

For (iv), suppose that distinct vertices $P,R\in V(Q)$ satisfy
$N_Q(P)=N_Q(R)$.  They are not adjacent, since otherwise
$R\in N_Q(P)=N_Q(R)$, which is impossible in a simple graph.  Hence,
for any $p\in P$ and $r\in R$, properties (i) and (ii) give
\[
  N_G(p)
  =\bigcup_{Z\in N_Q(P)}Z
  =\bigcup_{Z\in N_Q(R)}Z
  =N_G(r).
\]
Thus $p$ and $r$ belong to the same false-twin class, contradicting
$P\ne R$.

If $Q$ were bipartite, the unions of the classes in its two parts
would form a bipartition of $G$, contrary to the hypothesis on $G$.
This proves (v).  Finally, by (i), every edge of $G$ joins two
distinct false-twin classes.  Hence every path in $G$ projects to a
walk in $Q$, and the connectedness of $G$ implies (vi).
\end{proof}

For $P,R\in V(Q)$, define their weighted neighborhood difference by
\[
  \eta(P,R)=
  \sum_{Z\in N_Q(P)\setminus N_Q(R)}w(Z).
\]
By \cref{lem:quotient-properties}(iii), every vertex of $Q$ has
weighted degree $\Delta$.  Hence, for all $P,R\in V(Q)$,
\begin{equation}
  \eta(P,R)
  =\Delta-
   \sum_{Z\in N_Q(P)\cap N_Q(R)}w(Z)
  =\eta(R,P).
  \label{eq:eta-symmetric}
\end{equation}
If $P\ne R$, then $N_Q(P)\ne N_Q(R)$ by
\cref{lem:quotient-properties}(iv).  Since these neighborhoods have
the same total weight, $N_Q(P)\setminus N_Q(R)$ is nonempty.
All weights are positive integers, so, for distinct
$P,R\in V(Q)$,
\begin{equation}
  \eta(P,R)\ge1.
  \label{eq:eta-positive}
\end{equation}

Swapping two vertices in the same false-twin class is an automorphism
of $G$.  Hence, for $XY\in E(Q)$, both $D(uv)$ and
$N_G(u)\cap N_G(v)$ are independent of the choice of representatives
$u\in X$ and $v\in Y$.  Denote them by $D(X,Y)$ and $T_{XY}$,
respectively.

\begin{lemma}\label{lem:quotient-formula}
For every edge $XY\in E(Q)$,
\[
  D(X,Y)=
  \sum_{P\in N_Q(X)\setminus\{Y\}}w(P)\eta(P,Y)
  +e_G(T_{XY}).
\]
Consequently,
\begin{equation}
  D(X,Y)\ge\Delta-w(Y)
  \quad\text{and}\quad
  D(X,Y)\ge\Delta-w(X).
  \label{eq:basic-D-bound}
\end{equation}
\end{lemma}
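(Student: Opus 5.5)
Fix representatives $u\in X$ and $v\in Y$, and start from \cref{lem:local-identity}:
\[
  D(uv)=\sum_{x\in A}|N_G(x)\setminus N_G(v)|+e_G(T),
\qquad
  A=N_G(u)\setminus\{v\},\ T=T_{XY}.
\]
The plan is to regroup the sum over $A$ by false-twin classes and to rewrite each summand as a value of $\eta$.

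By \cref{lem:quotient-properties}(i),(ii), $N_G(u)$ is the disjoint union of the classes $P\in N_Q(X)$. Removing $v$ leaves $A$, which consists of all classes $P\in N_Q(X)\setminus\{Y\}$ together with $Y\setminus\{v\}$. We treat the two kinds of vertices in $A$ separately.

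For $x\in P$ with $P\ne Y$, properties (i),(ii) give $N_G(x)=\bigcup_{Z\in N_Q(P)}Z$ and $N_G(v)=\bigcup_{Z\in N_Q(Y)}Z$. Hence $|N_G(x)\setminus N_G(v)|=\eta(P,Y)$. Each such class contributes $w(P)$ identical terms, which gives $w(P)\eta(P,Y)$.

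For $x\in Y\setminus\{v\}$, $x$ is a false twin of $v$, so $N_G(x)\setminus N_G(v)=\varnothing$ and the term is $0$. This is the one point needing care: $Y$ itself is not summed in the formula, and its contribution indeed vanishes. Adding $e_G(T_{XY})$ back gives the identity.

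For \eqref{eq:basic-D-bound}, we drop $e_G(T)\ge0$. Each $P\in N_Q(X)\setminus\{Y\}$ is distinct from $Y$, so $\eta(P,Y)\ge1$ by \eqref{eq:eta-positive}. Therefore
\[
  D(X,Y)\ge\sum_{P\in N_Q(X)\setminus\{Y\}}w(P)=\Delta-w(Y),
\]
where the equality uses \cref{lem:quotient-properties}(iii).

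For the second bound, note that $D(uv)$ is symmetric in $u$ and $v$. The deficit depends only on $\gG(uv)$, and the set $\calE_G(A,B)$ is symmetric under swapping the endpoints. So $D(X,Y)=D(Y,X)$, and applying the first bound to the edge $YX$ gives $D(X,Y)\ge\Delta-w(X)$.

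The main obstacle is only bookkeeping: keeping the class $Y\setminus\{v\}$ inside $A$ separate from the other classes, and checking that $N_G(x)$ is exactly the union of the neighbouring classes, which is what lets $|N_G(x)\setminus N_G(v)|$ be read off as $\eta(P,Y)$.
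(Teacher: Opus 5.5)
Your proposal is correct and follows essentially the same route as the paper. Like the paper, you regroup the sum in \cref{lem:local-identity} by false-twin classes, with $Y\setminus\{v\}$ contributing zero and each $x\in P$ contributing $\eta(P,Y)$, and you obtain the two bounds from \eqref{eq:eta-positive}, the weighted-degree identity, and the symmetry $D(X,Y)=D(Y,X)$.
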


\begin{proof}
Choose representatives $u\in X$ and $v\in Y$, and put
$A=N_G(u)\setminus\{v\}$.  The set $A$ is the disjoint union of
$Y\setminus\{v\}$ and the classes
$P\in N_Q(X)\setminus\{Y\}$.

If $x\in Y\setminus\{v\}$, then $x$ and $v$ are false twins, so
$N_G(x)\setminus N_G(v)=\varnothing$.  If
$P\in N_Q(X)\setminus\{Y\}$ and $x\in P$, then
$N_G(x)\setminus N_G(v)$ is the disjoint union of the classes in
$N_Q(P)\setminus N_Q(Y)$.  Therefore
\[
  |N_G(x)\setminus N_G(v)|=\eta(P,Y).
\]
The formula now follows from \cref{lem:local-identity}.

For every $P\in N_Q(X)\setminus\{Y\}$,
\eqref{eq:eta-positive} gives $\eta(P,Y)\ge1$.  Hence
\[
  D(X,Y)\ge
  \sum_{P\in N_Q(X)\setminus\{Y\}}w(P)
  =\Delta-w(Y),
\]
where the equality follows from
\cref{lem:quotient-properties}(iii).  Applying this inequality with
$X$ and $Y$ interchanged and using $D(X,Y)=D(Y,X)$ gives the second
bound.
\end{proof}

For an ordered pair $(X,Y)$ of adjacent vertices of $Q$, define
\begin{equation}
  R(X,Y)=
  \sum_{P\in N_Q(X)\setminus\{Y\}}
  w(P)\bigl(\eta(P,Y)-1\bigr)+e_G(T_{XY}).
  \label{eq:R-def}
\end{equation}
Since $P\ne Y$ for every $P$ in the sum, all terms on the
right-hand side are nonnegative by \eqref{eq:eta-positive}.  Moreover,
\cref{lem:quotient-properties}(iii) gives
\[
  \sum_{P\in N_Q(X)\setminus\{Y\}}w(P)=\Delta-w(Y).
\]
Thus \cref{lem:quotient-formula} yields
\begin{equation}
  D(X,Y)=\Delta-w(Y)+R(X,Y),
  \qquad R(X,Y)\ge0.
  \label{eq:R-formula}
\end{equation}
The remainder need not be symmetric.  Comparing
\eqref{eq:R-formula} for $(X,Y)$ and $(Y,X)$ gives
\[
  R(X,Y)-R(Y,X)=w(Y)-w(X).
\]

A vertex $X\in V(Q)$ is \emph{clean} if $D(X,Y)\ge\Delta-1$ for every $Y\in N_Q(X)$.

\medskip

\begin{proof}[Proof of \cref{thm:clean-endpoint}]
It suffices to find a clean vertex $X$ having a neighbor $Y$ such
that $D(X,Y)\ge\Delta$.  Indeed, choose $u\in X$ and $v\in Y$.
By \cref{lem:quotient-properties}(ii), the vertices $u$ and $v$ are
adjacent, and
\[
  D(uv)=D(X,Y)\ge\Delta.
\]
If $x\in N_G(u)\setminus\{v\}$ belongs to the class $Z$, then
$Z\in N_Q(X)$, and hence
\[
  D(ux)=D(X,Z)\ge\Delta-1.
\]

Let $m=\min_{X\in V(Q)}w(X)$, the minimum size of a false-twin class.  We distinguish two cases.

\smallskip
\noindent\emph{Case 1: $m\ge2$.}
\smallskip

Choose $X\in V(Q)$ with $w(X)=m$, and let
$Y\in N_Q(X)$ be arbitrary.  By symmetry and
\cref{lem:quotient-formula}, applied to $(Y,X)$,
\[
  D(X,Y)=
  \sum_{P\in N_Q(Y)\setminus\{X\}}
    w(P)\eta(P,X)+e_G(T_{YX}).
\]
For each $P\in N_Q(Y)\setminus\{X\}$, we have $P\ne X$.
Since distinct quotient vertices have distinct neighborhoods of the
same total weight,
$N_Q(P)\setminus N_Q(X)$ is nonempty.  By the minimality of $m$,
it follows that $\eta(P,X)\ge m$.  Hence
\[
\begin{aligned}
  D(X,Y)
  &\ge m\sum_{P\in N_Q(Y)\setminus\{X\}}w(P)=m\bigl(\Delta-w(X)\bigr)
   =m(\Delta-m).
\end{aligned}
\]

Since $Q$ is nonbipartite, it contains an odd cycle.  A vertex on
this cycle has two distinct neighbors, each of weight at least $m$.
Thus $\Delta\ge2m\ge4$.  Consequently,
\[
  m(\Delta-m)\ge2(\Delta-2)\ge\Delta;
\]
the first inequality follows from $m(\Delta-m)-2(\Delta-2)=(m-2)(\Delta-m-2)\ge0$.
Since $Y$ was arbitrary, $D(X,Y)\ge\Delta$ for every
$Y\in N_Q(X)$. Choose $Y\in N_Q(X)$ and representatives $u\in X$ and $v\in Y$.
Every neighbor $x$ of $u$ belongs to some class
$Z\in N_Q(X)$, so
\[
  D(uv)=D(X,Y)\ge\Delta
  \quad \text{and} \quad
  D(ux)=D(X,Z)\ge\Delta
\]
for every $x\in N_G(u)\setminus\{v\}$.  Thus $u$ and $v$ satisfy
the conclusion of \cref{thm:clean-endpoint}.

\smallskip
\noindent\emph{Case 2: $m=1$.}
\smallskip

Let $\mathcal S=\{S\in V(Q)\colon w(S)=1\}.$
Every $S\in\mathcal S$ is clean, since
\eqref{eq:basic-D-bound} gives
$D(S,Y)\ge\Delta-w(S)=\Delta-1$ for every $Y\in N_Q(S)$. It remains to find an edge incident with a member of $\mathcal S$
whose deficit is at least $\Delta$.

Call an edge $SY\in E(Q)$ with $S\in\mathcal S$ a
\emph{tight singleton edge} if $D(S,Y)=\Delta-1$.  Suppose, to the
contrary, that no edge incident with a member of $\mathcal S$ has
deficit at least $\Delta$.  Since deficits are integers, every such edge is tight.  Thus, for
every $S\in\mathcal S$ and $Y\in N_Q(S)$,
\begin{equation}\label{eq:all-singleton-equality}
  D(S,Y)=\Delta-1.
\end{equation}

\begin{claim}\label{cl:propagation}
Let $S\in\mathcal S$ and $Y\in N_Q(S)$, and suppose that $D(S,Y)=\Delta-1$.  Then $e_G(T_{SY})=0$ and $\eta(P,S)=1$ for every $P\in N_Q(Y)\setminus\{S\}.$ Consequently, each of $N_Q(P)\setminus N_Q(S)$ and $N_Q(S)\setminus N_Q(P)$ consists of a single vertex of weight $1$.
\end{claim}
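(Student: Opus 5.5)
We will apply the quotient formula of \cref{lem:quotient-formula} to the ordered pair $(Y,S)$ rather than $(S,Y)$, and then read off equality in every term. Since deficits are symmetric, $D(S,Y)=D(Y,S)$. That lemma, with the roles of the endpoints interchanged, gives
\[
  D(S,Y)=\sum_{P\in N_Q(Y)\setminus\{S\}}w(P)\,\eta(P,S)+e_G(T_{YS}).
\]
By \cref{lem:quotient-properties}(iii), $\sum_{P\in N_Q(Y)\setminus\{S\}}w(P)=\Delta-w(S)=\Delta-1$, because $w(S)=1$. Equivalently, by \eqref{eq:R-formula} applied to $(Y,S)$,
\[
  D(S,Y)=\Delta-1+R(Y,S),
\]
where $R(Y,S)=\sum_{P}w(P)\bigl(\eta(P,S)-1\bigr)+e_G(T_{YS})$ is a sum of nonnegative terms.

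The hypothesis $D(S,Y)=\Delta-1$ therefore forces $R(Y,S)=0$. Every term must vanish, so $e_G(T_{YS})=0$, and $\eta(P,S)=1$ for every $P\in N_Q(Y)\setminus\{S\}$, since each weight $w(P)$ is positive. The intersection $T_{SY}=N_G(u)\cap N_G(v)$ is symmetric in the two endpoints, so $T_{SY}=T_{YS}$ and $e_G(T_{SY})=0$. This proves the first two assertions.

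For the consequence, fix such a $P$. By definition, $\eta(P,S)$ is the total weight of $N_Q(P)\setminus N_Q(S)$, and it equals $1$. The set is nonempty because $P\ne S$ and, by \cref{lem:quotient-properties}(iv), distinct quotient vertices have distinct neighborhoods of equal total weight. Since weights are positive integers, the set consists of exactly one vertex, and that vertex has weight $1$. By the symmetry \eqref{eq:eta-symmetric}, $\eta(S,P)=\eta(P,S)=1$, and the same argument shows that $N_Q(S)\setminus N_Q(P)$ is a single vertex of weight $1$.

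The only delicate point is choosing the orientation $(Y,S)$. The orientation $(S,Y)$ would give the baseline $\Delta-w(Y)$, which is useless here; the orientation $(Y,S)$ gives the baseline $\Delta-w(S)=\Delta-1$, which matches the hypothesis exactly. Everything else is bookkeeping with nonnegative terms.
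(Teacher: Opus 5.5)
Your proposal is correct and follows the paper's proof essentially verbatim: apply \eqref{eq:R-formula} to the orientation $(Y,S)$ to force $R(Y,S)=0$, read off $e_G(T_{SY})=0$ and $\eta(P,S)=1$ from the nonnegative terms of \eqref{eq:R-def}, and finish with positivity and integrality of the weights together with \eqref{eq:eta-symmetric}. Your appeal to \cref{lem:quotient-properties}(iv) for nonemptiness is harmless but unnecessary, since a total weight of $1$ already forces the set to be nonempty.
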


\smallskip
\noindent\emph{Proof of the claim.}
Since $D(Y,S)=D(S,Y)=\Delta-1$ and $w(S)=1$, \eqref{eq:R-formula}, applied to $(Y,S)$, gives $R(Y,S)=0$.
Every term in \eqref{eq:R-def} is nonnegative.  Hence $e_G(T_{SY})=0$ and $\eta(P,S)=1$ for every
$P\in N_Q(Y)\setminus\{S\}$.  Thus $N_Q(P)\setminus N_Q(S)$ has total weight $1$ and, since all weights
are positive integers, consists of a single vertex of weight $1$. By \eqref{eq:eta-symmetric}, the same holds for
$N_Q(S)\setminus N_Q(P)$. \hfill\(\lozenge\)

\bigskip

We first show that some shortest odd cycle of $Q$ contains a singleton.  Among the shortest odd cycles of $Q$, choose $C$ so that the length $t$ of a shortest path from $\mathcal S$ to $V(C)$ is minimum.  Let $P_0P_1\cdots P_t$ be such a path, where $P_0\in\mathcal S$ and $P_t\in V(C)$.  We show that $t=0$.

Suppose first that $t\ge3$.  By \eqref{eq:all-singleton-equality}, $P_0P_1$ is a tight singleton edge.  Applying \cref{cl:propagation} with $P=P_2$ shows that $N_Q(P_2)\setminus N_Q(P_0)$ consists of a single vertex of weight
$1$.  The vertex $P_3$ belongs to this difference, since an edge $P_0P_3$ would give a shorter path from $\mathcal S$ to $C$. Thus $P_3$ is a singleton closer to $C$ than $P_0$, a contradiction.

If $t=2$, the same application shows that $N_Q(P_2)\setminus N_Q(P_0)$ consists of a single vertex.  The two
neighbors of $P_2$ on $C$ are distinct, and neither is adjacent to $P_0$, since such an edge would give a path of length $1$ from $\mathcal S$ to $C$.  Both neighbors therefore belong to this difference, again a contradiction.

It remains to exclude $t=1$.  Write $P_1=X_0$ and $C=X_0X_1\cdots X_{\ell-1}X_0$, where $\ell$ is odd.  The vertex $P_0$ has no neighbor on $C$ other than $X_0$.  Indeed, suppose that $P_0X_j\in E(Q)$ for some $j\ne0$. The odd-length $X_0$--$X_j$ arc of $C$, together with $P_0X_0$ and $P_0X_j$, forms an odd cycle containing $P_0$.  The
other arc has positive even length and hence length at least $2$, so the new cycle has length at most $\ell$.  A shorter cycle contradicts the choice of $C$, while one of length $\ell$ contradicts the minimality of $t$.

The edge $P_0X_0$ is tight.  Applying \cref{cl:propagation} with $P=X_1$ shows that $N_Q(X_1)\setminus N_Q(P_0)$ consists of a single vertex of weight $1$.  Since $X_2$ belongs to this difference, it is a singleton on $C$, contradicting $t=1$.  Hence $t=0$.

Retain the cycle $C$ chosen above and write $C=X_0X_1\cdots X_{\ell-1}X_0$, with indices taken modulo $\ell$.  Thus $C$ is a shortest odd cycle of $Q$ and contains a singleton.  It is chordless, since any chord would produce a shorter odd cycle.

We next show that every vertex of $C$ is a singleton.  Suppose that $X_i$ is a singleton.  The edge $X_iX_{i-1}$ is tight, so
\cref{cl:propagation}, applied with $P=X_{i-2}$, shows that $N_Q(X_i)\setminus N_Q(X_{i-2})$ consists of a single vertex of
weight $1$.  This difference contains $X_{i+1}$.  Indeed, $X_{i+1}$ is adjacent to $X_i$ and is not adjacent to $X_{i-2}$:
when $\ell=3$, this follows because $X_{i+1}=X_{i-2}$ and $Q$ has no loops; when $\ell\ge5$, it follows from the chordlessness of $C$. Thus $X_{i+1}$ is a singleton.  Repeating the argument around $C$ shows that every $X_i$ is a singleton.

For each $i$, \eqref{eq:all-singleton-equality} and \eqref{eq:R-formula} now give
\begin{equation}\label{eq:R-zero-cycle}
  R(X_i,X_{i+1})=0.
\end{equation}

Suppose first that $\ell\ge5$.  In $R(X_i,X_{i+1})$, the term indexed by $P=X_{i-1}$ gives 
$$\eta(X_{i-1},X_{i+1})=1.$$  By
\eqref{eq:eta-symmetric}, the reverse difference also has total weight $1$.  Chordlessness gives
\[
\begin{aligned}
  X_{i-2}&\in N_Q(X_{i-1})\setminus N_Q(X_{i+1})  \quad \text{and} \quad X_{i+2}&\in N_Q(X_{i+1})\setminus N_Q(X_{i-1}).
  \end{aligned}
\]
Since all cycle vertices have weight $1$, it follows that
\[
\begin{aligned}
  N_Q(X_{i-1})\setminus N_Q(X_{i+1})
    &=\{X_{i-2}\} \quad \text{and} \quad
  N_Q(X_{i+1})\setminus N_Q(X_{i-1})
    &=\{X_{i+2}\}.
\end{aligned}
\]

Put $O_i=N_Q(X_i)\setminus\{X_{i-1},X_{i+1}\}$. The preceding equalities imply $O_{i-1}=O_{i+1}$.  Since $\ell$ is
odd, addition by $2$ visits every index modulo $\ell$, so all the
sets $O_i$ are equal to a common set $O$.  The weighted degree
identity gives
\[
  \sum_{Z\in O}w(Z)=\Delta-2>0.
\]
Choose $Z\in O$.  Since $C$ is chordless, $Z\notin V(C)$, and
$Z$ is adjacent to every vertex of $C$.  Hence
$ZX_iX_{i+1}Z$ is a triangle, contradicting the minimality of $C$.

Finally, suppose that $\ell=3$.  From
\eqref{eq:R-zero-cycle} and \eqref{eq:R-def}, for every $i$ we have
\[
  e_G(T_{X_iX_{i+1}})=0
  \quad\text{and}\quad
  \eta(X_{i-1},X_{i+1})=1.
\]
By \eqref{eq:eta-symmetric}, the reverse difference also has total
weight $1$.  Since $Q$ is simple and $C$ is a triangle,
\[
\begin{aligned}
  X_{i+1}&\in N_Q(X_{i-1})\setminus N_Q(X_{i+1}) \quad \text{and} \quad
  X_{i-1}&\in N_Q(X_{i+1})\setminus N_Q(X_{i-1}).
 \end{aligned}
\]
Therefore
\[
\begin{aligned}
  N_Q(X_{i-1})\setminus N_Q(X_{i+1})
    &=\{X_{i+1}\} \quad \text{and} \quad 
  N_Q(X_{i+1})\setminus N_Q(X_{i-1})
    &=\{X_{i-1}\}.
\end{aligned}
\]
Thus the three vertices $X_0,X_1,X_2$ have the same closed
neighborhood in $Q$.

The neighbors of $X_0$ outside $\{X_1,X_2\}$ have total weight
$\Delta-2>0$.  Choose
$Z\in N_Q(X_0)\setminus\{X_1,X_2\}$.  Equality of the closed
neighborhoods implies that $Z$ is adjacent to both $X_1$ and $X_2$.
Let $x_i$ be the unique vertex in $X_i$, and choose $z\in Z$.
By \cref{lem:quotient-properties}(ii), both $x_2$ and $z$ belong to
$T_{X_0X_1}$, and $x_2z\in E(G)$.  Hence
$e_G(T_{X_0X_1})>0$, contradicting
$e_G(T_{X_0X_1})=0$.

This contradiction shows that some edge $SY$, with
$S\in\mathcal S$, satisfies $D(S,Y)\ge\Delta$.  Choose
representatives $u\in S$ and $v\in Y$.  Since $S$ is clean, the edge $uv$ and every edge $ux$ with
$x\in N_G(u)\setminus\{v\}$ satisfy, respectively,
\[
  D(uv)\ge\Delta
  \quad\text{and}\quad
  D(ux)\ge\Delta-1.
\]
Thus $u$ and $v$ satisfy the conclusion of
\cref{thm:clean-endpoint}.
\end{proof}

\subsection{Completion of the proof}
\label{sec:mainproof}

\begin{proof}
Suppose that the theorem fails for some integer $\Delta\ge3$.  Put
$k=\Delta(\Delta-1)$ and fix a palette $\Omega$ of $k$ colors.
Choose, among all counterexamples, a graph $G$ with the minimum
number of edges and, subject to this, the minimum number of vertices.
Thus $G$ is a finite loopless multigraph with
$\Delta(G)\le\Delta$, no component of $G$ is isomorphic to the
simple graph $K_{\Delta,\Delta}$, and $G$ admits no $B$-coloring
from $\Omega$.

We use the following observation repeatedly.  If $F\subseteq E(G)$
and $H=G-F$, then every component of $H$ isomorphic to
$K_{\Delta,\Delta}$ is also a component of $G$.  Indeed, every vertex
$x$ of such a component satisfies $d_H(x)=\Delta$.  Since
$d_G(x)\le\Delta$, no edge of $F$ is incident with $x$, so the
component is unchanged in $G$.

The graph $G$ is connected.  Otherwise, each component $C$ has fewer
edges than $G$, unless it contains all edges of $G$, in which case it
has fewer vertices.  By minimality, after relabeling colors, every
component has a $B$-coloring from $\Omega$.  Combining these
colorings gives a $B$-coloring of $G$, a contradiction.

The graph $G$ is also simple.  If an edge $e$ belongs to a parallel
class of size at least two, then $G-e$ has no
$K_{\Delta,\Delta}$-component by the observation above.  By
minimality, $G-e$ has a $B$-coloring from $\Omega$, which extends to
$G$ by \cref{lem:parallel-edge}, again a contradiction.

If $\Delta=3$, the theorem of Xue, Hu, and Kong
\cite{XueHuKong2026} gives $\qB(G)\le6=k$, since $G$ is connected,
simple, and not isomorphic to $K_{3,3}$.  Hence we may assume that
$\Delta\ge4$.

We next show that $G$ is $\Delta$-regular.  Since $G$ is connected
and has an edge, every vertex has positive degree.  If $G$ is not
$\Delta$-regular, choose $u\in V(G)$ with
$1\le d_G(u)\le\Delta-1$, and put $H=G-S_u$.  The graph $H$ has
fewer edges than $G$ and, by the observation above, has no
$K_{\Delta,\Delta}$-component.  By minimality, $H$ has a
$B$-coloring from $\Omega$.  This coloring extends to $G$ by
\cref{lem:low-degree-star}, a contradiction.  Thus $G$ is
$\Delta$-regular.

If $G$ is bipartite, then \cref{thm:bipartite} gives
$\qB(G)\le\Delta(\Delta-1)=k$, a contradiction.  Hence $G$ is
nonbipartite.

By \cref{thm:clean-endpoint}, there are adjacent vertices $u$ and $v$ such that $D(uv)\ge\Delta$ and every
$x\in N_G(u)\setminus\{v\}$ satisfies $D(ux)\ge\Delta-1$. The graph $G-uv$ has fewer edges than $G$ and, by the observation above, has no $K_{\Delta,\Delta}$-component.  By minimality, it has a $B$-coloring from $\Omega$.  The displayed inequalities are precisely the hypotheses of \cref{lem:clean-extension}, so this coloring extends to $G$, a contradiction.
\end{proof}

\section{Concluding remarks and open problems}
\label{sec:conclusion}

For integers $\Delta\ge2$ and $1\le d\le\Delta$, let
$M_d(\Delta)$ denote the maximum of $\qB(G)$ over all finite simple
$d$-degenerate graphs $G$ with $\Delta(G)\le\Delta$ and containing
no copy of $K_{d,\Delta}$.  The lower bound is witnessed by
$K_{d,\Delta-1}$, while \cref{cor:Kdt-free} gives
\[
  d(\Delta-1)\le M_d(\Delta)\le d\Delta-1.
\]

The two bounds coincide when $d=1$, giving
$M_1(\Delta)=\Delta-1$, and
\cref{cor:two-degenerate-second} gives
$M_2(\Delta)=2\Delta-1$.  At the opposite endpoint, every copy of
$K_{\Delta,\Delta}$ in a graph of maximum degree at most $\Delta$
is a component. Thus, for every integer $\Delta\ge3$, \cref{thm:main} yields
$M_\Delta(\Delta)=\Delta(\Delta-1)$.
For $\Delta\ge4$, \cref{thm:three-degenerate-second} yields
\[
  3\Delta-3\le M_3(\Delta)\le3\Delta-2.
\]
Thus, among the cases $d<\Delta$, the general upper bound
$d\Delta-1$ is attained for $d=1,2$, but not for $d=3$.
For $4\le d<\Delta$, the case $s=1$ of
\cref{cor:one-defect} shows that if
$M_d(\Delta)=d\Delta-1$, then every graph attaining this value
contains copies of both $K_{d,\Delta-1}$ and
$K_{d-1,\Delta}$.

\begin{problem}
Determine $M_d(\Delta)$ for integers $3\le d<\Delta$.  In
particular, for each $\Delta\ge4$, decide which of the two values
$3\Delta-3$ and $3\Delta-2$ equals $M_3(\Delta)$.
\end{problem}

The sharp gap theorem raises a second extremal problem.

\begin{problem}
For each integer $\Delta\ge3$, characterize the finite connected
loopless multigraphs $G$ such that $\Delta(G)=\Delta$ and $\qB(G)=\Delta(\Delta-1)$.
\end{problem}

\section*{Acknowledgments}

This work was supported by NSFC (Nos. 12071048; 12422113). During the preparation of this manuscript, the authors used ChatGPT  to assist with English-language editing and expository organization.  All AI-assisted suggestions were reviewed and revised by the authors.  The authors assume responsibility for all content.

\section*{Data Availability Statement}

Data sharing is not applicable to this article, as no datasets were
generated or analyzed during the current study.

\end{document}